\documentclass[11pt]{article}
\usepackage{latexsym,amsfonts,amssymb,amsmath,amsthm}
\usepackage{graphicx}

\usepackage[usenames,dvipsnames]{color}
\usepackage{ulem}
\usepackage{xcolor}

\newcommand {\rd}{\color{red}}

\parindent 0.5cm
\evensidemargin 0cm \oddsidemargin 0cm \topmargin 0cm \textheight 22cm \textwidth 16.2cm \footskip 2cm \headsep
0cm

\begin{document}
\setlength{\baselineskip}{16pt}

\parindent 0.5cm
\evensidemargin 0cm \oddsidemargin 0cm \topmargin 0cm \textheight 22.5cm \textwidth 16cm \footskip 2cm \headsep
0cm

\newtheorem{theorem}{Theorem}[section]
\newtheorem{lemma}{Lemma}[section]
\newtheorem{proposition}{Proposition}[section]
\newtheorem{definition}{Definition}[section]
\newtheorem{example}{Example}[section]
\newtheorem{corollary}{Corollary}[section]

\newtheorem{remark}{Remark}[section]

\numberwithin{equation}{section}

\def\p{\partial}
\def\I{\textit}
\def\R{\mathbb R}
\def\C{\mathbb C}
\def\u{\underline}
\def\l{\lambda}
\def\a{\alpha}
\def\O{\Omega}
\def\e{\epsilon}
\def\ls{\lambda^*}
\def\D{\displaystyle}
\def\wyx{ \frac{w(y,t)}{w(x,t)}}
\def\imp{\Rightarrow}
\def\tE{\tilde E}
\def\tX{\tilde X}
\def\tH{\tilde H}
\def\tu{\tilde u}
\def\d{\mathcal D}
\def\aa{\mathcal A}
\def\DH{\mathcal D(\tH)}
\def\bE{\bar E}
\def\bH{\bar H}
\def\M{\mathcal M}

\def\disp{\displaystyle}
\def\undertex#1{$\underline{\hbox{#1}}$}
\def\card{\mathop{\hbox{card}}}
\def\sgn{\mathop{\hbox{sgn}}}
\def\exp{\mathop{\hbox{exp}}}
\def\OFP{(\Omega,{\cal F},\PP)}
\newcommand\JM{Mierczy\'nski}

\newcommand\Q{\ensuremath{\mathbb{Q}}}
\newcommand\Z{\ensuremath{\mathbb{Z}}}
\newcommand\N{\ensuremath{\mathbb{N}}}

\title{Stabilization in two-species chemotaxis   systems
with singular sensitivity and Lotka-Volterra competitive kinetics}
\author{Halil Ibrahim Kurt and Wenxian Shen   \\
Department of Mathematics and Statistics\\
Auburn University\\
Auburn University, AL 36849\\
U.S.A. }

\date{}
\maketitle

\begin{abstract}
The current paper is concerned with the stabilization in
 the following parabolic-parabolic-elliptic chemotaxis system with singular sensitivity and Lotka-Volterra competitive kinetics,
 \begin{equation}
\label{abstract-eq}
\begin{cases}
u_t=\Delta u-\chi_1 \nabla\cdot (\frac{u}{w} \nabla w)+u(a_1-b_1u-c_1v) ,\quad &x\in \Omega\cr
v_t=\Delta v-\chi_2 \nabla\cdot (\frac{v}{w} \nabla w)+v(a_2-b_2v-c_2u),\quad &x\in \Omega\cr
0=\Delta w-\mu w +\nu u+ \lambda v,\quad &x\in \Omega \cr
\frac{\p u}{\p n}=\frac{\p v}{\p n}=\frac{\p w}{\p n}=0,\quad &x\in\p\Omega,
\end{cases}
\end{equation}
where $\Omega \subset \mathbb{R}^N$ is a bounded smooth  domain, and   $\chi_i$, $a_i$, $b_i$, $ c_i$ ($i=1,2$)  and
$\mu,\, \nu, \, \lambda$
 are positive constants.  In \cite{HKWS3}, among others,
 we proved that  for any given nonnegative initial data $u_0,v_0\in C^0(\bar\Omega)$ with $u_0+v_0\not \equiv 0$, \eqref{abstract-eq} has
a unique globally defined classical solution   $(u(t,x;u_0,v_0),v(t,x;u_0,v_0),w(t,x;u_0,v_0))$ with
$u(0,x;u_0,v_0)=u_0(x)$ and $v(0,x;u_0,v_0)=v_0(x)$  {provided that $\min\{a_1,a_2\}$ is large relative to $\chi_1,\chi_2$, and $u_0+v_0$ is not  small in the case that
$(\chi_1-\chi_2)^2\le \max\{4\chi_1,4\chi_2\}$ and $u_0+v_0$ is neither { small} nor big in the case that $(\chi_1-\chi_2)^2>\max\{4\chi_1,4\chi_2\}$.}
In this paper, we first  assume that the competition in \eqref{abstract-eq} is weak in the sense that
$$
\frac{c_1}{b_2}<\frac{a_1}{a_2},\quad \frac{c_2}{b_1}<\frac{a_2}{a_1}.
$$
Then \eqref{abstract-eq} has a unique positive constant solution
$(u^*,v^*,w^*)$, where
$$
u^*=\frac{a_1b_2-c_1a_2}{b_1b_2-c_1c_2},\quad v^*=\frac{b_1a_2-a_1c_2}{b_1b_2-c_1c_2}, \quad w^*=\frac{\nu}{\mu}u^*+\frac{\lambda}{\mu} v^*.
$$
We obtain some  explicit conditions on $\chi_1,\chi_2$  which ensure that the positive constant solution $(u^*,v^*,w^*)$ is globally stable { in the sense that} for any given nonnegative initial data $u_0,v_0\in C^0(\bar\Omega)$ with $u_0\not \equiv 0$ and $v_0\not \equiv 0$, { if $(u(t,x;u_0,v_0),v(t,x;u_0,v_0),w(t,x;u_0,v_0))$ exists globally and stays bounded, then}
$$
\lim_{t\to\infty}\Big(\|u(t,\cdot;u_0,v_0)-u^*\|_\infty
+\|v(t,\cdot;u_0,v_0)-v^*\|_\infty+\|w(t,\cdot;u_0,v_0)-w^*\|_\infty\Big)=0.
$$
\end{abstract}

\noindent {\bf Key words.} Parabolic-parabolic-elliptic chemotaxis  system,  singular sensitivity,   Lotka-Volterra competitive kinetics, positive equilibrium, stabilization.

\medskip

\noindent {\bf 2020  Mathematics subject Classification.} 35K51, 35K57, 35M33, 35Q92, 92C17, 92D25.

\section{Introduction}
\label{S:intro}

Chemotaxis models are used to  describe  the movements of cells or organisms in response to chemicals in their environments in many biological processes such as population dynamics, tumor growth, gravitational collapse, embryo development, etc. Since the pioneering works by Keller and Segel (\cite{ke-se1}, \cite{Keller-1}) on chemotaxis models, many studies have been conducted on the qualitative properties of various chemotaxis models such as the  blow-up in finite time, global existence and boundedness of classical solutions, existence of steady states, and the asymptotic behavior of globally defined   solutions, etc.  The reader is referred to \cite{bel-wi,  hillen-painter,Hor} and the references therein for some detailed introduction
into the mathematics of chemotaxis models.

 The current paper is devoted to the study of stabilization in   the following parabolic-parabolic-elliptic chemotaxis system with singular sensitivity and   Lotka-Volterra competitive kinetics,
\begin{equation}
\label{main-eq}
\begin{cases}
u_t=\Delta u-\chi_1 \nabla\cdot (\frac{u}{w} \nabla w)+u(a_1-b_1u-c_1v) ,\quad &x\in \Omega { ,\, \,\,\quad t>0,} \cr
v_t=\Delta v-\chi_2 \nabla\cdot (\frac{v}{w} \nabla w)+v(a_2-b_2v-c_2u),\quad &x\in \Omega { ,\, \,\,\quad t>0,}\cr
0=\Delta w-\mu w +\nu u+ \lambda v,\quad &x\in \Omega {, \,\,\,\quad t>0,} \cr
\frac{\p u}{\p n}=\frac{\p v}{\p n}=\frac{\p w}{\p n}=0,\quad &x\in\p\Omega { , \quad t>0,}
\end{cases}
\end{equation}
where $\Omega \subset \mathbb{R}^N$ is a bounded smooth  domain, and   $\chi_i$, $a_i$, $b_i$, $ c_i$ ($i=1,2$)  and
$\mu,\, \nu, \, \lambda$
 are positive constants. 
Biologically,  \eqref{main-eq} describes   the evolution of  two competitive  species  subject to the influence of a chemical substance produced by themselves. In such case, 
$u(t,x)$ and $v(t,x)$ represent the population densities
of two competitive   species, whose local dynamics is governed by  the Lotka-Volterra competitive kinetics
$u(a_1-b_1u-c_1v)$ and
$v(a_2-b_2v-c_2u)$.    $w(t,x)$  represents  the concentration of
the chemical substance, which is produced by the two species $u$ and $v$  at the rates $\nu$ and $\lambda$, respectively, and degrades at the rate $\mu$.  $\frac{\chi_1}{w}$ and $\frac{\chi_2}{w}$  reflect the strength of the chemical substance on the movements of  the two species, and are referred to as chemotaxis sensitivity functions or coefficients.
 Note  that  the chemotaxis sensitivities $\frac{\chi_i}{w}$  ($i=1,2$) are  singular near $w=0$,   reflecting  an inhibition of chemotactic migration at high signal concentrations. Such a sensitivity  describing  the living organisms' response to the chemical signal  was derived by the Weber-Fechner law (see \cite{Keller-1}).

There are many studies on various two competing species chemotaxis systems. For example,
consider {the following  two competing species chemotaxis system,}
\begin{equation}
\label{main-eq0}
\begin{cases}
u_t= \Delta u-  \nabla\cdot (u \chi_1(w) \nabla w)+u(a_1-b_1u-c_1v) ,\quad &x\in \Omega\cr
v_t= \Delta v- \nabla\cdot (v\chi_2(w)\nabla w)+ v(a_2 -b_2v- c_2u),\quad &x\in \Omega\cr
\tau w_t= \Delta w-{\mu w}  +\nu u+ \lambda  v,\quad &x\in \Omega\cr
\frac{\p u}{\p n}=\frac{\p v}{\p n}=\frac{\p w}{\p n}=0,\quad &x\in\p\Omega,
\end{cases}\,
\end{equation}
where, as in \eqref{main-eq},  $\Omega\subset\R^N$ is a bounded smooth domain;
$u(t,x)$ and $v(t,x)$ represent the population densities
of two competitive   species;   $w(t,x)$  represents  the concentration of
the chemical substance produced by the two species; $\tau\ge 0$ is related to the diffusion rate of the chemical substance;  and   the functions  $\chi_1(w)$ and $\chi_2(w)$ reflect the strength of the chemical substance on the movements of the two species, and are referred to as chemotaxis sensitivity functions or coefficients.

 When $\chi_1(w)\equiv \chi_1 >0$ and $\chi_2(w)\equiv \chi_2>0 $, \eqref{main-eq0} is referred to as  a two species chemotaxis system with {\it  linear sensitivity}.
Consider  \eqref{main-eq0} with linear sensitivity
 and $\tau=0$. It is known that  if $N\le 2$, or $N\ge 3$  and $\chi_i$ is relatively small with respect  to $b_i$ and $c_i$ ($i=1,2$), then  \eqref{main-eq0} possesses a unique globally defined classical solution $(u(t,x;u_0,v_0),v(t,x;u_0,v_0), w(t,x;u_0,v_0))$ with
$u(0,x;u_0,v_0)=u_0(x)$ and $v(0,x;u_0,v_0)=v_0(x)$  for  any nonnegative initial data $u_0,v_0\in C^0(\bar\Omega)$  (see  \cite{isra,  issh3, limuwa, st-te-wi, te-wi} and references therein).
In \cite{te-wi},  the {\it stabilization} at the unique positive constant solutions in \eqref{main-eq0} is proved when the competition is weak (roughly speaking,  $c_1$ and $c_2$ are  small) and $\chi_1,\chi_2$ are relatively small,
that is,  all the positive solutions converge to the unique positive constant solution
(see \cite[Theorem 0.1]{te-wi} for the detail).  The authors of \cite{st-te-wi} proved that {\it competitive exclusion}  occurs when $c_1$ is small and $c_2$ is large (resp.  $c_1$ is large and $c_2$ is small) and $\chi_1,\chi_2$ are relatively small,  that is,
{ $v(t,x;u_0,v_0)\to 0$}  (resp. { $u(t,x;u_0,v_0)\to 0$}) as $t\to\infty$ for any positive initials $u_0,v_0$  (see \cite[Theorem 1.1]{st-te-wi} for the detail).
We refer  the reader to the papers  \cite{bllami, isra, issh2, issh3, miz3, miz4}, etc. for
other studies on  the large time behaviors  of  globally defined classical solutions of \eqref{main-eq0}  with linearity sensitivity and $\tau=0$.

Consider \eqref{main-eq0} with linear sensitivity  and $\tau=1$.
It is also known  that, if
$N\le 2,$ or $N\ge 3$ and   $\chi_1$ and $\chi_2$ are relatively small with respect  to other parameters in \eqref{main-eq0}, then
  \eqref{main-eq0} possesses a unique globally defined classical solution $(u(t,x;u_0,v_0 ,w_0),v(t,x;u_0,v_0{ ,w_0}), w(t,x;u_0,v_0{ ,w_0}))$ with
$u(0,x;u_0,v_0{ ,w_0})=u_0(x)$, $v(0,x;u_0,v_0{ ,w_0})=v_0(x)$, and $w(0,x;u_0,v_0{,w_0})=w_0(x)$  for any nonnegative initial data $u_0,v_0\in C(\bar\Omega)$, $w_0\in W^{1,\infty}(\Omega)$, (see \cite{ba-wi, limuwa, zhni}, etc.).
The large-time behaviors of globally defined classical solutions are also  investigated  in \cite{ba-wi, zhni}, etc.
 The reader is referred  to the articles \cite{Hor2, zhni} for the further details.

Among nonlinear sensitivity functions   are $\chi_i(w)=\frac{\chi_i}{w}$ for some positive constant $\chi_i$ ($i=1,2)$. 
Note that \eqref{main-eq0} with  $\chi_i(w)=\frac{\chi_i}{w}$
and $\tau=0$
reduces to \eqref{main-eq}. 
The current paper is to study the long time dynamics of   \eqref{main-eq}.

For the biological reason, we are only interested in  the  solutions of \eqref{main-eq} with initial functions
$u_0,v_0\in C^0(\bar\Omega)$ with $u_0\ge 0$, $v_0\ge 0$, and  $\int_\Omega(u_0+v_0)>0$.  We remark that
 if $v_0=0$ (resp. $u_0=0$), then $v(t,x)\equiv 0$
(resp. $u(t,x)\equiv 0$) on the existence interval. If $v(t,x)\equiv 0$, then \eqref{main-eq} becomes
\begin{equation}
\label{main-eq1}
\begin{cases}
u_t=\Delta u-\chi_1 \nabla\cdot (\frac{u}{w} \nabla w)+u(a_1-b_1u) ,\quad &x\in \Omega\cr
0=\Delta w-\mu w +\nu u,\quad &x\in \Omega \cr
\frac{\p u}{\p n}=\frac{\p w}{\p n}=0,\quad &x\in\p\Omega,
\end{cases}
\end{equation}
and if $u(t,x)\equiv 0$, then \eqref{main-eq} becomes
\begin{equation}
\label{main-eq2}
\begin{cases}
v_t=\Delta v-\chi_2 \nabla\cdot (\frac{v}{w} \nabla w)+v(a_2-b_2v),\quad &x\in \Omega\cr
0=\Delta w-\mu w + \lambda v,\quad &x\in \Omega \cr
\frac{\p v}{\p n}=\frac{\p w}{\p n}=0,\quad &x\in\p\Omega.
\end{cases}\,
\end{equation}
Systems \eqref{main-eq1} and \eqref{main-eq2} are essentially the {same} and have been studied in  several works  (see \cite{Bil,Bla, CaWaYu, FuWiYo1,FuWiYo,HKWS,HKWS2,NaSe}, etc.). In particular, it is proved in \cite{HKWS} that for any $u_{0}\in C(\bar\Omega)$ with $u_{0}\ge 0$ and
$\int_\Omega u_{0}(x)dx>0$, \eqref{main-eq1} has a unique globally defined classical solution $(u(t,x;u_0),w(t,x;u_0))$ satisfying $u(0,x;u_0)=u_0(x)$
{ provided that $a_1$ is large relative to $\chi_1$ and $u_0$ is not small} (see \cite[Theorem 1.2]{HKWS}).
Cao et al. in \cite{CaWaYu} considered the stability of the positive constant solution of \eqref{main-eq1} with $\mu=\nu=1$ and proved that,    if
\begin{equation*}
a_1>2(\sqrt{\chi_1+1}-1)^2+\frac{\chi_1^2}{16\eta|\Omega|}
\end{equation*}
 with the constant $\eta$ depending  on $\Omega$,  then  for any nonnegative initial data $0\not\equiv u_0\in C^0(\bar \Omega)$ satisfying that
\begin{equation}
\label{assumption-eq3}
\int_{\Omega} u_0^{-1}<16 \eta b_1 |\Omega|^2/\chi_1^2,
\end{equation}
  the globally defined positive solution of \eqref{main-eq}  converges to $\big( \frac{a_1}{b_1},\frac{\nu}{\mu} \frac{a_1}{b_1} \big)$ as $t\to\infty$ exponentially (see \cite[Theorem 1]{CaWaYu} for details).
When $0<\chi\ll 1$, by the arguments \cite[Theorem 1]{CaWaYu}, it can  also be proved that the positive constant solution $(\frac{a_1}{b_1},\frac{\nu}{\mu}\frac{a_1}{b_1})$
of \eqref{main-eq1}  is globally stable. In the very recent paper \cite{HKWSSX}, the authors of the current paper together with S. Xue showed that there are nonconstant positive stationary solutions  of \eqref{main-eq1} bifurcating from  $(\frac{a_1}{b_1},\frac{\nu}{\mu}\frac{a_1}{b_1})$ when $\chi$ is not small.

 As far as we know,  there is not much work on the global existence and asymptotic behavior of solutions of \eqref{main-eq} with initial conditions  $ { ( } u_0,v_0)$ satisfying
\begin{equation}
\label{old-initial-cond-eq}
u_0,v_0\in C(\bar\Omega),\,\, u_0,v_0\ge 0,\,\, {\rm and}\,\,\int_\Omega u_0>0,\,\, \int_\Omega v_0>0.
\end{equation}
Recently, we  investigated  in \cite{HKWS3}  the global existence, boundedness, and combined persistence  of  classical solutions of
\eqref{main-eq} (see Definition \ref{classical-solution-def} for the definition of classical solutions of \eqref{main-eq}).
Among others, we proved that for any given $ {( } u_0,v_0)$ satisfying {\eqref{old-initial-cond-eq}},
\eqref{main-eq} has a unique globally {defined} classical solution
 $(u(t,x;u_0,v_0),v(t,x;u_0,v_0),w(t,x;u_0,v_0))$ satisfying
\begin{equation}
\label{local-2-eq0}
\lim_{t\to 0+}\|u(t,\cdot;u_0,v_0)-u_0(\cdot)\|_{C^0(\bar\Omega)}=0,\quad \lim_{t\to 0+}\|v(t,\cdot;u_0,v_0)-v_0(\cdot)\|_{C^0(\bar\Omega)}=0
\end{equation}
{provided that $a_{\min}=\min\{a_1,a_2\}$ is large relative to $\chi_1$ and $\chi_2$, and $u_0+v_0$ is not small in the case that $(\chi_1-\chi_2)^2\le \max\{4\chi_1,4\chi_2\}$
and 
$u_0+v_0$ is neither small nor big in  the case that
$(\chi_1-\chi_2)^2>\min\{4\chi_1,4\chi_2\}$
(see Theorem 1.3 in \cite{HKWS3}).  

The objective of this paper is to study  stabilization in \eqref{main-eq} when the competition is  weak in the sense that
\begin{equation}
\label{weak-competition-eq1}
{c_1}<\frac{a_1b_2}{a_2},\quad {c_2}<\frac{a_2 b_1}{a_1}.
\end{equation}

Assume \eqref{weak-competition-eq1}. Then \eqref{main-eq} has a unique positive constant solution
$(u^*,v^*,w^*)$, where
\begin{equation}
\label{positive-constant-solu-eq}
u^*=\frac{a_1b_2-c_1a_2}{b_1b_2-c_1c_2},\quad v^*=\frac{b_1a_2-a_1c_2}{b_1b_2-c_1c_2}, \quad w^*=\frac{\nu}{\mu}u^*+\frac{\lambda}{\mu} v^*.
\end{equation}
It is well known that $(u^*,v^*)$ is a globally  stable solution of the following competition diffusion system,
\begin{equation}
\label{main-no-diffusion-eq}
\begin{cases}
u_t=\Delta u+u(a_1-b_1u-c_1v) ,\quad &x\in \Omega\cr
v_t=\Delta v+v(a_2-b_2v-c_2u),\quad &x\in \Omega\cr
\frac{\p u}{\p n}=\frac{\p v}{\p n}=0,\quad &x\in\p\Omega,
\end{cases}
\end{equation}
that is, for any given positive initial functions $u_0,v_0$,
$$
\lim_{t\to\infty}\big(\|u(t,\cdot;u_0,v_0)-u^*\|_\infty+\|v(t,\cdot;u_0,v_0)-v^*\|_\infty\big)=0,
$$
where $(u(t,x,;u_0,v_0),v(t,x;u_0,v_0))$ is the solution of \eqref{main-no-diffusion-eq} with
$u(0,x;u_0,v_0)=u_0(x)$ and $v(0,x;u_0,v_0)=v_0(x)$.

Consider \eqref{main-eq}. It is interesting to identify the parameter regions of $\chi_1,\chi_2$ for the occurrence of stabilization in \eqref{main-eq}, i.e., for the global stability of $(u^*,v^*,w^*)$ { in certain sense}.
Among others, we will prove

\begin{itemize}

\item {\it Assume \eqref{weak-competition-eq1}. Then $(u^*,v^*,w^*)$ is a globally  stable stationary solution of \eqref{main-eq} {  in the sense that any globally defined bounded positive solution converges to $(u^*,v^*,w^*)$ as $t\to\infty$}  provided that $\chi_1,\chi_2$ are small relative to the other parameters in \eqref{main-eq}} (see Theorem \ref{main-thm2}).

\end{itemize}

The rest of this paper is organized as follows.  In section 2, we  introduce some standing notations to be used in the paper, state the main results of the paper, and provide some remarks on the main results. In section 3, we recall some of the known results from \cite{HKWS3}  to be used in this current work. In section 4, we  study the lower bounds of $w(t,x;u_0,v_0)$, which plays an important role in the study of stabilization in \eqref{main-eq}.  Section 5 is devoted to the analysis of stabilization of system \eqref{main-eq}.

\section{Notations, main results, and remarks}

In this section, we introduce some notations to be used in the paper, state the main results of the paper,  and provide some remarks on the main results.

\subsection{Notations}

In this subsection, we  introduce some  notations to be used in the rest of the paper.
Let
\begin{equation*}
\begin{cases}
    a_{\min}=\min\{a_1,a_2\},\quad a_{\max}=\max\{a_1,a_2\}\cr
b_{\min}=\min\{b_1,b_2\}, \quad  \, b_{\max}=\max\{b_1,b_2\}\cr
c_{\min}=\min\{c_1,c_2\},\quad \, c_{\max}=\max\{c_1,c_2\}.
\end{cases}
\end{equation*}
Let
\begin{equation}
\label{m-0-eq}
m_0=m_0(a_1,a_2,b_1,b_2,c_1,c_2)= \big(b_{\max}+c_{\max}\big)  \cdot \max\Big\{1,\frac{a_1}{b_1}+\frac{a_2}{b_2}\Big\},
\end{equation}
\begin{equation}
\label{m-star-eq}
m^*(\mu,\nu,\lambda)=\max\Big\{\max\big\{1,-\mu+\frac{\nu}{2}+\frac{\lambda}{2}\big\}\frac{2\nu ^2}{\mu^2} +\frac{\nu}{2},\max \big\{1,-\mu+\frac{\nu}{2}+\frac{\lambda}{2}\big\}\frac{2\lambda ^2}{\mu^2} +\frac{\lambda}{2}\Big\},
\end{equation}
and
\begin{equation*}
m(\mu,\nu,\lambda)=\frac{m^*{ (}\mu,\nu,\lambda)}{\min\{\nu,\lambda\}}.
\end{equation*}
 Let $|\Omega|$ be the Lebesgue measure of $\Omega$, and
\begin{equation}
\label{delta-0-eq}
\delta_0=\int_0^\infty \frac{1}{(4\pi t)^{n/2}}e^{-\Big(t+\frac{ ({\rm diam}\Omega)^2}{4t}\Big)}dt.
\end{equation}

For given $\xi_1,\xi_2,\eta>0$,  let
\begin{equation*}
B(\xi_1,\xi_2,\eta)=\left(\begin{matrix}\xi_1 b_1-\eta & \frac{1}{2}(\xi_1 c_1+\xi_2 c_2)
\\
& \\
\frac{1}{2}{ (}\xi_1 c_1+\xi_2 c_2) & \xi_2 b_2-\eta
\end{matrix}\right).
\end{equation*}
Let
\begin{equation}
\label{xi-1-2-eq1}
\begin{cases}
\xi_1=1,\quad \xi_2=\frac{c_1}{c_2}\quad {\rm if}\,\,  c_2\ge c_1\cr
\xi_1=\frac{c_2}{c_1},\quad  \xi_2=1\quad {\rm if}\,\,
c_1>c_2.
\end{cases}
\end{equation}
Assume \eqref{weak-competition-eq1}. We have  $c_1c_2<b_1b_2$ and the function
\begin{equation*}
g(\eta):=\eta^2 -(\xi_1b_1+\xi_2 b_2)\eta  +\xi_1\xi_2 (b_1b_2-c_1c_2)
\end{equation*}
is either positive for every $\eta>0$ or there is $\tilde \eta=\tilde\eta(a_1,a_2,b_1,b_2,c_1,c_2)>0$ such that
\begin{equation}
\label{tidle-eta-eq}
g(\tilde\eta)=0\quad {\rm and}\quad g(\eta)>0\quad \forall\, 0<\eta<\tilde\eta.
\end{equation}
Let
\begin{equation*}
\eta_0=\eta_0(a_1,a_2,b_1,b_1,c_1,c_2)=\begin{cases}
\min\{\xi_1 b_1,\xi_2 b_2\}\quad &{\rm if}\,\, g(\eta)>0\,\,\, \forall\, \eta>0\cr
\cr
\min\{\xi_1b_1,\xi_2 b_2,\tilde \eta\}\quad &{\rm if\,\, there\,\, is}\,\,  \tilde\eta>0\,\, {\rm satisfying}\,\,  \eqref{tidle-eta-eq}.
\end{cases}
\end{equation*}

It is not difficult to prove that the matrix $B(\xi_1,\xi_2,\eta)$  is positive definite for any
$0<\eta<\eta_0$. In fact, it is easy to see that
$$
{\rm Trace}\Big( B(\xi_1,\xi_2,\eta)\Big)=\xi_1b_1+\xi_2 b_2-2\eta>0\quad \forall \,\, 0<\eta<\eta_0.
$$
Note that
\begin{align*}
{\rm det}\Big(B(\xi_1,\xi_2,\eta)\Big)&=\big(\xi_1 b_1-\eta\big)\big(\xi_2 b_2-\eta\big)-\frac{1}{4}\big(\xi_1 c_1+\xi_2 c_2\big)^2\nonumber\\
&=\eta^2-(\xi_1b_1+\xi_2b_2)\eta+\xi_1\xi_2b_1b_2-\frac{1}{4}\Big(\xi_1^2 c_1^2 +2\xi_1\xi_2 c_1c_2+\xi_2^2 c_2^2)\nonumber\\
&=\eta^2-(\xi_1b_1+\xi_2b_2)\eta+\xi_1\xi_2(b_1b_2-c_1c_2)-\frac{1}{4}\Big(\xi_1^2 c_1^2 -2\xi_1\xi_2 c_1c_2+\xi_2^2 c_2^2)\nonumber\\
&=\eta^2-(\xi_1b_1+\xi_2b_2)\eta +\xi_1\xi_2 (b_1b_2-c_1c_2)-\frac{1}{4}(\xi_1 c_1-\xi_2 c_2)^2.
\end{align*}
By \eqref{xi-1-2-eq1}, we have $\xi_1 c_1-\xi_2 c_2=0$,  and then
$$
{\rm det}\big(B(\xi_1,\xi_2,\eta)\big)=g_1(\eta)>0\quad \forall\, \, 0<\eta<\eta_0\le \tilde \eta.
$$
Therefore,  $B(\xi_1,\xi_2,\eta)$  is positive definite for any
$0<\eta<\eta_0$.

\subsection{Main results}

In this subsection, we state the main results of this paper. {Note that this paper is to study the asymptotic behavior of globally defined bounded positive solutions of \eqref{main-eq}, that is, solutions of \eqref{main-eq} with initial function $(u_0,v_0)$ satisfying}
\begin{equation}
\label{new-initial-cond-eq}
{ (u_0,v_0)\,\,\, \text{satisfies}\,\,\, \eqref{old-initial-cond-eq},\,\,\, T_{\max}(u_0,v_0)=\infty,\,\,\, \limsup_{t\to\infty}\|(u+v)(t,\cdot;u_0,v_0)\|_\infty<\infty.}
\end{equation}
{The existence of such solutions of \eqref{main-eq} was studied in
\cite{HKWS3}.}

The first main result  is  on the lower bounds of $w(t,x;u_0,v_0)$ and is stated as follows, which  is  one of the main ingredients in  the study of stabilization  in \eqref{main-eq}.

\begin{theorem}
\label{main-thm1}
\begin{itemize}
\item[(1)]
Assume that $\chi_1,\chi_2$ satisfy
\begin{equation}
\label{cond-on-chi-eq1}
 a_{\min}>2 \mu\cdot h(\chi_1,\chi_2),
\end{equation}
where
\begin{equation}
\label{h-function-eq}
h(\chi_1,\chi_2)=  \max\Big\{\chi_1,\chi_2, \frac{(\chi_1-\chi_2)^2}{4}\Big\}.
\end{equation}
{Then,} for any given $u_0,v_0$ satisfying {\eqref{new-initial-cond-eq}},
\begin{equation}
\label{new-lower-bound-eq1}
\liminf_{t\to\infty}\int_\Omega (u+v)\ge  \frac{|\Omega|\Big (a_{\min}-{ 2}h(\chi_1,\chi_2)\Big)^{1/p}}{m_0^{1/p}},
\end{equation}
and
\begin{equation}
\label{lower-bound-proof-eq3}
\liminf_{t\to\infty}\inf_{x\in\Omega} w(t,x;u_0,v_0)\ge \delta_0  \cdot\min\{\nu,\lambda\}\cdot
 \frac{|\Omega|\Big (a_{\min}-2\mu \cdot h(\chi_1,\chi_2)\Big)^{1/p}}{m_0^{1/p}},
\end{equation}
where
\begin{equation}
\label{p-eq}
p=\begin{cases}
1\quad &{\rm if}\quad \frac{(\chi_1-\chi_2)^2}{4\chi_2}\le 1\quad {\rm or}\quad \frac{(\chi_1-\chi_2)^2}{4 \chi_1}\le 1\cr\cr
\frac{4\chi_2}{(\chi_1-\chi_2)^2}\quad &{\rm if}\quad  \frac{(\chi_1-\chi_2)^2}{4\chi_2}> 1\quad {\rm and}\quad \frac{(\chi_1-\chi_2)^2}{4 \chi_1}>1.
\end{cases}
\end{equation}

\item[(2)] If $\chi_1=\chi_2$,  assume that
\begin{equation}
\label{cond-on-chi-eq1-1}
a_{\min}>2\mu { \big(\sqrt{\chi_1+1}-1\big)^2}.
\end{equation}
Then,  for any given $u_0,v_0$ satisfying {\eqref{new-initial-cond-eq}},
\begin{equation}
\label{new-lower-bound-eq2}
\liminf_{t\to\infty} \int_\Omega (u+v)\ge |\Omega|\frac{a_{\min}-2\mu{ (\sqrt{\chi_1+1}-1)^2}}{b_{\max}+c_{\max}},
\end{equation}
 and
\begin{equation}
\label{lower-bound-proof-eq3-1}
\liminf_{t\to\infty}\inf_{x\in\Omega} w(t,x)\ge \delta_0\cdot\min\{\nu,\lambda\}\cdot   \frac{|\Omega| \Big(a_{\min}-2\mu{ (\sqrt{\chi_1+1}-1)^2}\Big)}{b_{\max}+c_{\max}}.
\end{equation}
\end{itemize}
\end{theorem}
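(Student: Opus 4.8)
The plan is to obtain the pointwise bound \eqref{lower-bound-proof-eq3} on $w$ as a consequence of the mass bound \eqref{new-lower-bound-eq1}, so the real content is a lower bound for $\liminf_{t\to\infty}\int_\Omega(u+v)$. Writing $z=u+v$, I would monitor a negative-power functional $F(t)=\int_\Omega z^{-\theta}$ for a suitable exponent $\theta=\theta(\chi_1,\chi_2)>0$ (with the logarithm $\int_\Omega\ln z$ serving as the $\theta\to0$ model), because an explicit a priori bound on $F$ converts, via H\"older's inequality $|\Omega|^{\theta+1}\le\big(\int_\Omega z\big)^{\theta}\int_\Omega z^{-\theta}$, into exactly a lower bound for $\int_\Omega z$. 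The $z$-equation reads $z_t=\Delta z-\nabla\cdot\big(\tfrac{\chi_1u+\chi_2v}{w}\nabla w\big)+\big(a_1u+a_2v-b_1u^2-b_2v^2-(c_1+c_2)uv\big)$, so differentiating $F$ the logistic reaction produces the stabilizing term $-\theta\int_\Omega z^{-\theta-1}(a_1u+a_2v)\le-\theta\,a_{\min}\int_\Omega z^{-\theta}$, the inequality $\tfrac{a_1u+a_2v}{u+v}\ge a_{\min}$ being precisely how $a_{\min}$ enters; the remaining quadratic reaction term is controlled by the carrying-capacity-type bound $\limsup_t\|z(t,\cdot)\|_\infty\le\tfrac{a_1}{b_1}+\tfrac{a_2}{b_2}$ recalled from \cite{HKWS3}, which together with $b_{\max}+c_{\max}$ assembles the constant $m_0$.

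The diffusion contributes the favorable quantity $-\theta(\theta+1)\int_\Omega z^{-\theta}|\nabla\ln z|^2$, while the two chemotactic terms merge, after one integration by parts, into the single coupling $\theta(\theta+1)\int_\Omega z^{-\theta}\,\bar\chi\,\nabla\ln z\cdot\nabla\ln w$ with the pointwise effective sensitivity $\bar\chi=\tfrac{\chi_1u+\chi_2v}{u+v}\in[\min\{\chi_1,\chi_2\},\max\{\chi_1,\chi_2\}]$. The device that pays for this coupling is the elliptic equation for $w$: dividing $0=\Delta w-\mu w+\nu u+\lambda v$ by $w$ and using $\Delta\ln w=\tfrac{\Delta w}{w}-|\nabla\ln w|^2$ yields the pointwise identity $|\nabla\ln w|^2=\mu-\Delta\ln w-\tfrac{\nu u+\lambda v}{w}$; integrated against $1$ with the Neumann condition it gives $\int_\Omega|\nabla\ln w|^2\le\mu|\Omega|$, and integrated against the weight $z^{-\theta}$ it bounds the weighted gradient $\int_\Omega z^{-\theta}|\nabla\ln w|^2$ by $\mu\int_\Omega z^{-\theta}$ plus a further cross term.

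The crux, and the step I expect to be hardest, is to absorb the chemotactic coupling into the favorable diffusion term at a cost of the form $2\mu\,\theta\,h(\chi_1,\chi_2)\int_\Omega z^{-\theta}$. For part (1) I would estimate $\bar\chi$ on its constant part by $\max\{\chi_1,\chi_2\}$ and treat the fluctuation $\bar\chi-\min\{\chi_1,\chi_2\}\in[0,|\chi_1-\chi_2|]$ by Cauchy-Schwarz; after inserting the two consequences of the $w$-equation and completing the square, the three competing costs $\chi_1$, $\chi_2$ and $\tfrac{(\chi_1-\chi_2)^2}{4}$ emerge, whose maximum is $h(\chi_1,\chi_2)$, while the requirement that $\theta$ balance the fluctuation against the diffusion is what the piecewise exponent $p$ in \eqref{p-eq} records. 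When $\chi_1=\chi_2=\chi$ the fluctuation vanishes and a sharper argument is available: with $\theta=1$ and the abbreviations $A=\int_\Omega z^{-1}|\nabla\ln z|^2$, $B=\int_\Omega z^{-1}|\nabla\ln w|^2$, $C=\int_\Omega z^{-1}\nabla\ln z\cdot\nabla\ln w$, $D=\int_\Omega z^{-1}$, the constraints $C^2\le AB$ and $B\le\mu D-C$ reduce the bound on $-2A+2\chi C$ to an elementary calculus problem whose exact maximum is $2\mu(\sqrt{\chi+1}-1)^2D$, giving the sharp constant of part (2). Simultaneously tracking the spatially varying $\bar\chi$ and the weighted gradient $\int_\Omega z^{-\theta}|\nabla\ln w|^2$ while keeping the constant sharp is the main obstacle.

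Combining these estimates yields a differential inequality $\tfrac{d}{dt}F\le\theta\big(2\mu\,h-a_{\min}\big)F+\text{const}$, whose coefficient is negative precisely under the hypothesis $a_{\min}>2\mu\,h(\chi_1,\chi_2)$ of \eqref{cond-on-chi-eq1}; a Gronwall/comparison argument then bounds $\limsup_t F$ explicitly, and H\"older's inequality delivers \eqref{new-lower-bound-eq1} with the exponent $1/p$ (and its sharpened form \eqref{new-lower-bound-eq2} when $\chi_1=\chi_2$). Finally, for \eqref{lower-bound-proof-eq3} I would represent $w$ through the Neumann Green's function $G$ of $-\Delta+\mu$, namely $w(t,x)=\int_\Omega G(x,y)\big(\nu u(t,y)+\lambda v(t,y)\big)\,dy$; the Gaussian lower bound for the heat semigroup generated by $-\Delta+\mu$ together with $|x-y|\le{\rm diam}\,\Omega$ gives $G(x,y)\ge\delta_0$, and since $\nu u+\lambda v\ge\min\{\nu,\lambda\}(u+v)$ we get $\inf_{x}w(t,x)\ge\delta_0\min\{\nu,\lambda\}\int_\Omega(u+v)$; passing to the $\liminf$ and inserting the mass bound completes the proof.
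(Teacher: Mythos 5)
Your proposal follows essentially the same route as the paper: the paper also monitors $\int_\Omega (u+v)^{-p}$, obtains the key control of the chemotactic coupling by multiplying the elliptic equation for $w$ by $(u+v)^{-p}/w$ (which is exactly your identity $\int_\Omega z^{-p}|\nabla \ln w|^2 \le \mu\int_\Omega z^{-p}-p\int_\Omega z^{-p-1}\nabla z\cdot\nabla\ln w$, their Lemma \ref{lower-bound-lm4}), splits $\chi_1u+\chi_2v$ into a constant part plus a fluctuation $(\chi_1-\chi_2)u$ absorbed by Young's inequality with the case-dependent exponent $p$ of \eqref{p-eq}, converts the resulting bound on $\limsup_t\int_\Omega z^{-p}$ into \eqref{new-lower-bound-eq1} by the same H\"older inequality, and deduces \eqref{lower-bound-proof-eq3} from the Green's-function lower bound $w\ge\delta_0\min\{\nu,\lambda\}\int_\Omega(u+v)$. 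Your part (2) optimization (maximizing $-2A+2\chi C$ subject to $C^2\le AB$, $B\le\mu D-C$) is an equivalent repackaging of the paper's choice of the auxiliary parameter $\beta$ with $(\chi-\beta)^2=4\beta$, and it does yield the sharp constant $2\mu(\sqrt{\chi+1}-1)^2$. The one step you should repair is the control of the quadratic reaction term: you invoke $\limsup_t\|u+v\|_\infty\le \frac{a_1}{b_1}+\frac{a_2}{b_2}$ as a carrying-capacity bound ``recalled from [HKWS3],'' but no such explicit $L^\infty$ bound is available there (and it is not expected to hold pointwise in the presence of the chemotactic flux); the hypothesis \eqref{new-initial-cond-eq} only gives a qualitative, non-explicit $L^\infty$ bound. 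The paper instead uses the $L^1$ mass bounds $\limsup_t\int_\Omega u\le \frac{a_1|\Omega|}{b_1}$, $\limsup_t\int_\Omega v\le \frac{a_2|\Omega|}{b_2}$ together with H\"older's inequality, $\int_\Omega z^{1-p}\le|\Omega|^{p}\big(\int_\Omega z\big)^{1-p}$ (valid since $0<p\le1$), which produces the same constant $m_0$; with that substitution your argument goes through.
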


The second main result is on the stabilization, i.e.,  the global stability of $(u^*,v^*,w^*)$, and is stated in the following theorem.

\begin{theorem}
\label{main-thm2}
Assume that $c_1,c_2$ satisfy \eqref{weak-competition-eq1}.
 Let $(u^*,v^*,w^*)$ be as in  \eqref{positive-constant-solu-eq}.

\begin{itemize}
\item[(1)] Assume that
  $\chi_1,\chi_2$ satisfy
\begin{equation}
\label{cond-on-chi-eq2}
a_{\min}>2\mu \cdot  h(\chi_1,\chi_2) +m_0\cdot  m_1 \cdot \Big(\frac{\chi_1^2 u^*+\chi_2^2 v^*}{4}\Big)^p,
\end{equation}
where  $h(\chi_1,\chi_2)$ is as in \eqref{h-function-eq}, $m_0$ is as in \eqref{m-0-eq},  $p$ is as in \eqref{p-eq}, and
\begin{equation*}
m_1=\max\Big\{ 1, \frac{m(\mu,\nu,\lambda)}{\delta_0 \cdot |\Omega|\cdot   w^*\cdot  \eta_0 }\Big\}.
\end{equation*}
Then, for any $u_0,v_0$ satisfying {\eqref{new-initial-cond-eq}},
$$
\lim_{t\to\infty} \Big(\|u(t,\cdot;u_0,v_0)-u^*\|_\infty+\|v(t,\cdot;u_0,v_0)-v^*\|_\infty\Big)=0.
$$

\item[(2)] If $\chi_1=\chi_2$,
 assume that
\begin{equation}
\label{cond-on-chi-eq2-1}
a_{\min}>2\mu { \big(\sqrt{\chi_1+1}-1\big)^2} +\tilde m_0\cdot \tilde  m_1 \cdot \Big(\frac{\chi_1^2 u^*+\chi_2^2 v^*}{4}\Big),
\end{equation}
where
\begin{equation*}
\tilde m_0=b_{\max}+c_{\max},\quad \text{and} \quad
\tilde m_1=\frac{m(\mu,\nu,\lambda)}{\delta_0 \cdot |\Omega|\cdot   w^*\cdot \eta_0}.
\end{equation*}
Then, for any $u_0,v_0$ satisfying {\eqref{new-initial-cond-eq}},
$$
\lim_{t\to\infty} \Big(\|u(t,\cdot;u_0,v_0)-u^*\|_\infty+\|v(t,\cdot;u_0,v_0)-v^*\|_\infty\Big)=0.
$$
\end{itemize}
\end{theorem}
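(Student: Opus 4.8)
The plan is to prove stabilization by building an entropy‑type Lyapunov functional adapted to the singular sensitivity and showing that its dissipation controls $\|u-u^*\|_{L^2}$ and $\|v-v^*\|_{L^2}$. Throughout write $U=u-u^*$, $V=v-v^*$, $W=w-w^*$, and let $\xi_1,\xi_2,\eta_0$ be as in \eqref{xi-1-2-eq1} and the surrounding discussion. For a solution satisfying \eqref{new-initial-cond-eq}, which is positive for $t>0$, I would set
$$E(t)=\xi_1\int_\Omega\Big(u-u^*-u^*\ln\tfrac{u}{u^*}\Big)+\xi_2\int_\Omega\Big(v-v^*-v^*\ln\tfrac{v}{v^*}\Big),$$
which is nonnegative and vanishes exactly at $(u^*,v^*)$. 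The whole argument then reduces to a differential inequality $E'(t)\le-\delta\int_\Omega(U^2+V^2)$ valid for all large $t$ and some $\delta>0$; cases (1) and (2) differ only in which lower bound from Theorem \ref{main-thm1} is inserted.

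First I would compute $E'(t)$ by substituting \eqref{main-eq}. Since $\tfrac{d}{ds}\big(s-s^*-s^*\ln\tfrac{s}{s^*}\big)=1-\tfrac{s^*}{s}$, integration by parts against the Neumann conditions turns the diffusion terms into $-\xi_1u^*\int_\Omega\tfrac{|\nabla u|^2}{u^2}-\xi_2v^*\int_\Omega\tfrac{|\nabla v|^2}{v^2}$ and the chemotaxis terms into $\xi_1\chi_1u^*\int_\Omega\tfrac{\nabla u\cdot\nabla w}{uw}+\xi_2\chi_2v^*\int_\Omega\tfrac{\nabla v\cdot\nabla w}{vw}$; the factor $\tfrac{u}{w}$ in the flux cancels against the $\tfrac1u$ from the logarithm, which is precisely why the entropy functional is the right choice for this singular model. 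For the kinetics, the equilibrium identities $a_1-b_1u^*-c_1v^*=0$ and $a_2-b_2v^*-c_2u^*=0$ rewrite the reaction contribution as $-\int_\Omega(U,V)\,B(\xi_1,\xi_2,0)\,(U,V)^T$. Bounding the chemotaxis integrand by $\chi_1\tfrac{\nabla u}{u}\cdot\tfrac{\nabla w}{w}\le\tfrac{|\nabla u|^2}{u^2}+\tfrac{\chi_1^2}{4}\tfrac{|\nabla w|^2}{w^2}$ (and similarly for $v$) absorbs the diffusion terms completely and leaves the single ``bad'' term, so that
$$E'(t)\le-\int_\Omega(U,V)B(\xi_1,\xi_2,0)(U,V)^T+\frac{\xi_1\chi_1^2u^*+\xi_2\chi_2^2v^*}{4}\int_\Omega\frac{|\nabla w|^2}{w^2}.$$

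The heart of the matter is to control $\int_\Omega\tfrac{|\nabla w|^2}{w^2}$ by $\int_\Omega(U^2+V^2)$. Here I invoke Theorem \ref{main-thm1}: condition \eqref{cond-on-chi-eq2} forces \eqref{cond-on-chi-eq1}, so there are $w_{\min}>0$ and $t_0>0$ with $w(t,x)\ge w_{\min}$ for $t\ge t_0$, where $w_{\min}$ is the explicit bound \eqref{lower-bound-proof-eq3} (resp. \eqref{lower-bound-proof-eq3-1} in case (2)). Then $\int_\Omega\tfrac{|\nabla w|^2}{w^2}\le w_{\min}^{-2}\int_\Omega|\nabla W|^2$, and testing the elliptic identity $\Delta W-\mu W+\nu U+\lambda V=0$ against $W$ bounds $\int_\Omega|\nabla W|^2$ by a $(\mu,\nu,\lambda)$–dependent multiple of $\int_\Omega(U^2+V^2)$, the sharp constant being exactly what $m^*(\mu,\nu,\lambda)$ and $m(\mu,\nu,\lambda)$ record. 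Writing $B(\xi_1,\xi_2,0)=B(\xi_1,\xi_2,\eta)+\eta I$ and using that $B(\xi_1,\xi_2,\eta)$ is positive definite for $0<\eta<\eta_0$, the reaction term furnishes a surplus dissipation $-\eta\int_\Omega(U^2+V^2)$, and \eqref{cond-on-chi-eq2} is exactly the inequality ensuring that for some admissible $\eta<\eta_0$ this surplus strictly dominates the bad chemotaxis term once $w_{\min}$ is inserted; the quantities $m_0,m_1,\eta_0$ and the exponent $p$ are precisely the bookkeeping of this comparison. Reconciling the quantitative strength of $w_{\min}$ (which is only asymptotic, so one works for $t\ge t_0$ and absorbs an $\varepsilon$–loss into the strict inequality) with the coefficient of the bad term is the main obstacle.

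Once $E'(t)\le-\delta\int_\Omega(U^2+V^2)$ holds for $t\ge t_0$, the functional $E$ is nonincreasing and bounded below, hence convergent, and integrating yields $\int_{t_0}^\infty\int_\Omega(U^2+V^2)\,dx\,dt<\infty$. Because solutions satisfying \eqref{new-initial-cond-eq} are bounded, parabolic regularity makes the orbit $\{(u(t,\cdot),v(t,\cdot))\}$ relatively compact in $C^0(\bar\Omega)\times C^0(\bar\Omega)$ and renders $t\mapsto\int_\Omega(U^2+V^2)$ uniformly continuous, so the finite‑integral bound forces $\int_\Omega(U^2+V^2)\to0$. Every $C^0$–limit point of the orbit is then an $L^2$–limit point and hence equals $(u^*,v^*)$; by precompactness and uniqueness of the limit the orbit converges, giving $\|u(t,\cdot)-u^*\|_\infty+\|v(t,\cdot)-v^*\|_\infty\to0$. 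Feeding this into the elliptic equation for $w$ via Neumann elliptic regularity yields in addition $\|w(t,\cdot)-w^*\|_\infty\to0$, and both cases follow identically.
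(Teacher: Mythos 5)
Your overall architecture coincides with the paper's: the same weighted entropy $E(t)$ with weights $\xi_1,\xi_2$, the same integration by parts and cancellation of the singular flux against the logarithm, the same Young absorption of the diffusion terms leaving $\frac{\xi_1\chi_1^2u^*+\xi_2\chi_2^2v^*}{4}\int_\Omega\frac{|\nabla w|^2}{w^2}$ as the only bad term, the same identification of the reaction part with the quadratic form $B(\xi_1,\xi_2,\eta)$, and the same finish ($E'\le-\delta\int(U^2+V^2)$, integrability plus uniform continuity of $t\mapsto\int_\Omega(U^2+V^2)$, then precompactness in $C^0$ to upgrade to $L^\infty$).

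The genuine gap is in the one step you yourself flag as "the heart of the matter." You propose to control the bad term via
$\int_\Omega\frac{|\nabla w|^2}{w^2}\le w_{\min}^{-2}\int_\Omega|\nabla W|^2$ followed by testing the elliptic equation against $W$. That chain is valid, but it produces a constant of the form $C(\mu,\nu,\lambda)\,w_{\min}^{-2}$, whereas the hypothesis \eqref{cond-on-chi-eq2} is calibrated to a bound that is linear in $w_{\min}^{-1}$, namely
$\int_\Omega\frac{|\nabla w|^2}{w^2}\le \frac{m^*(\mu,\nu,\lambda)}{w^*\,\inf_{x}w(t,x)}\big[\int_\Omega(u-u^*)^2+\int_\Omega(v-v^*)^2\big]$.
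The paper obtains this by testing the elliptic equation against $\frac{w-w^*}{w}$ rather than against $w-w^*$: since $\nabla\frac{w-w^*}{w}=\frac{w^*\nabla w}{w^2}$, the Laplacian term yields $-w^*\int_\Omega\frac{|\nabla w|^2}{w^2}$ exactly, and only a single factor of $\inf_x w$ is ever divided out (via Lemma \ref{stability-lm1}). With your quadratic dependence on $w_{\min}^{-1}$, the comparison against $\eta_0$ requires $(a_{\min}-2\mu h)^{2/p}$ rather than $(a_{\min}-2\mu h)^{1/p}$ on the favorable side, so the explicit inequality \eqref{cond-on-chi-eq2} (and likewise \eqref{cond-on-chi-eq2-1}) no longer "is exactly the inequality ensuring" positivity of the dissipation matrix; since $w_{\min}$ need not exceed $w^*$, your bound is not dominated by the paper's and the stated hypothesis does not obviously suffice. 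To close the gap you should replace the test function $W$ by $\frac{w-w^*}{w}$ (or equivalently $1-\frac{w^*}{w}$) in the elliptic identity; the rest of your argument then goes through verbatim.
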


\subsection{Remarks}

In this subsection, we provide some remarks on the main results stated in subsection 2.2.

\begin{remark}
\label{lower-bound-rk}
We make the following comments on Theorem \ref{main-thm1}.
\begin{itemize}

\item[(1)] { By the third equation in (1.1), it is not difficult to prove  that  \eqref{lower-bound-proof-eq3} and \eqref{lower-bound-proof-eq3-1} follow from \eqref{new-lower-bound-eq1} and \eqref{new-lower-bound-eq2}, respectively (see Lemma \ref{lower-bound-lm2}). 
 Hence Theorem \ref{main-thm1}  is mainly about the combined mass persistence, that is,  $\liminf_{t\to\infty}\int_\Omega (u+v)>0$
(see \eqref{new-lower-bound-eq1} and \eqref{new-lower-bound-eq2}). 
Note that   $a_1,a_2>0$ are the intrinsic growth rates of the species $u$ and $v,$ respectively. Biologically, the results in  Theorem \ref{main-thm1}  can be interpreted  as follows: 
when the chemotaxis sensitivities $\chi_1,\chi_2$ and the degradation rate of the chemical substance are small relative to the   intrinsic growth rates of the species 
(see \eqref{cond-on-chi-eq1} and \eqref{cond-on-chi-eq1-1}),   the populations of two species  as a whole   (i.e. the sum of the total populations of two species)    always persist.
It should be pointed out that the weak competition condition \eqref{weak-competition-eq1}
is not assumed in Theorem \ref{main-thm1}.  When the competition between two species is not weak, it is possible that one of the species goes extinct.}

\item[(2)] The lower bound of $w(t,x;u_0,v_0)$  obtained in Theorem \ref{main-thm1} plays
an important role in the proof of Theorem \ref{main-thm2}.

\item[(3)] As it is mentioned in (1), to obtain a lower bound for $w(t,x;u_0,v_0)$, it is sufficient to obtain a lower bound for $\int_\Omega (u(t,x;u_0,v_0)+v(t,x;u_0,v_0))dx$ (see Lemma \ref{lower-bound-lm2}). To obtain a lower bound for  $\int_\Omega (u(t,x;u_0,v_0)+v(t,x;u_0,v_0))dx$,
it suffices to obtain an upper bound for $\int_\Omega (u(t,x;u_0,v_0)+v(t,x;u_0,v_0))^{-p}$ for some $p>0$ (see Lemma \ref{lower-bound-lm3}).

\item[(4)]  In \cite{HKWS3},  it is proved { that $(u_0,v_0)$ satisfies \eqref{new-initial-cond-eq} provided that $a_{\min}=\min\{a_1,a_2\}$ is large relative to $\chi_1$ and $\chi_2$, and $u_0+v_0$ is not small in the case that $(\chi_1-\chi_2)^2\le \max\{4\chi_1,4\chi_2\}$ and $u_0+v_0$ is neither small nor big in the case that $(\chi_1-\chi_2)^2>\max\{4\chi_1,4\chi_2\}$ (see \cite[Theorem 1.3]{HKWS3}).}

\item[(5)]  When $\chi_1=\chi_2$, the condition \eqref{cond-on-chi-eq1-1} is weaker than
\eqref{cond-on-chi-eq1}, and the lower bound  in \eqref{lower-bound-proof-eq3-1} for $w(t,x;u_0,v_0)$  is larger than the lower bound in \eqref{lower-bound-proof-eq3} for
$w(t,x;u_0,v_0)$. Due to the singularity in \eqref{main-eq}, larger lower bound for $w(t,x;u_0,v_0)$ is better.

\end{itemize}
\end{remark}

\begin{remark}
\label{stability-rk}
We make the following comments on Theorem \ref{main-thm2}.
\begin{itemize}

\item[(1)] Note that the condition  \eqref{weak-competition-eq1} indicates that the competition between two species is weak.  
Under this condition,  \eqref{main-eq} has a unique positive constant solution
$(u^*,v^*,w^*)$ given by 
\eqref{positive-constant-solu-eq}. When $\chi_1=\chi_2=0$,  $(u^*,v^*, w^*)$ is globally stable, or  system \eqref{main-eq} is stabilized at $(u^*,v^*,w^*)$,  since $(u^*,v^*)$  is a globally stable solution of the competition diffusion system \eqref{main-no-diffusion-eq}. 
 Biologically, the results in  Theorem \ref{main-thm2}  can be interpreted  as follows: 
when the competition between two species is weak and
the chemotaxis sensitivities $\chi_1,\chi_2$ and the degradation rate of the chemical substance are small relative to the   intrinsic growth rates of the species 
(see \eqref{cond-on-chi-eq2} and \eqref{cond-on-chi-eq2-1}),  system \eqref{main-eq} is stabilized at 
the unique positive constant solution $(u^*,v^*,w^*)$.

\item[(2)] The condition \eqref{cond-on-chi-eq2} implies \eqref{cond-on-chi-eq1}
(resp.  the condition \eqref{cond-on-chi-eq2-1} implies \eqref{cond-on-chi-eq1-1}).

\item[(3)] The ideas to prove Theorem \ref{main-thm2} include  the following:
first prove
\begin{equation}
\label{l-2-convergence-eq1}
\lim_{t\to\infty} \Big(\|u(t,\cdot;u_0,v_0)-u^*\|_{L^2(\Omega)}+\|v(t,\cdot;u_0,v_0)-v^*\|_{L^2(\Omega)}\Big)=0,
\end{equation}
and then prove
$$
\lim_{t\to\infty} \Big(\|u(t,\cdot;u_0,v_0)-u^*\|_\infty+\|v(t,\cdot;u_0,v_0)-v^*\|_\infty\Big)=0.
$$
Note that \eqref{l-2-convergence-eq1} is equivalent to
\begin{equation}
\label{l-2-convergence-eq2}
\lim_{t\to\infty}\|u(t,\cdot;u_0,v_0)-u^*\|_{L^2(\Omega)}=0,\quad \lim_{t\to\infty} \|v(t,\cdot;u_0,v_0)-v^*\|_{L^2(\Omega)}=0.
\end{equation}
But we realize that it is not easy to prove \eqref{l-2-convergence-eq2} directly.

\item[(4)]  To prove \eqref{l-2-convergence-eq1}, we  first prove that the  energy function
$E(t)=E(t;u_0,v_0)$ is decreasing in $t$ for $t\gg 1$, where
\begin{align}
\label{energy-function-eq}
E(t;u_0,v_0)&=\xi_1\int_\Omega \Big(u(t,x;u_0,v_0)-u^*-u^*\ln\frac{u(t,x;u_0,v_0)}{u^*}\Big)\nonumber\\
&\quad +\xi_2\int_\Omega(v(t,x;u_0,v_0)-v^*-v^*\ln\frac{v(t,x;u_0,v_0)}{v^*}\Big),
\end{align}
where $\xi_1,\xi_2$ are as in \eqref{xi-1-2-eq1}.
Observe that energy functions of the form \eqref{energy-function-eq} are used in the study of  the stability of nonnegative stationary solutions of competitive systems without chemotaxis (see  \cite{Goh, NiShWa}, etc.).
Observe also that  $E(t;u_0,v_0)$ is well defined and $E(t;u_0,v_0)>0$ for all $t>0$.
 It should be pointed out that, in \cite{CaWaYu},
the following energy function
$$
 E_1(t)=  \int_{\Omega} \left[ u(t,x;u_0) -\frac{a_1}{b_1}-\frac{a_1}{b_1} \ln{\Big(\frac{b_1 u(t,x;u_0)}{a_1} \Big)} \right]dx
$$
is used to prove the stability of  the constant solution $(\frac{a_1}{b_1},\frac{\nu}{\mu}\frac{a_1}{b_1})$ of \eqref{main-eq1} in the case that $N=2$  with $u_0$ satisfying \eqref{assumption-eq3} (see \cite[Theorem 1]{CaWaYu} for details). In Theorem \ref{main-thm2}, except {\eqref{new-initial-cond-eq}}, no any other condition is assumed for $u_0,v_0$,
which is due to the lower bound we obtained in Theorem \ref{main-thm1} for any $u_0,v_0$ satisfying {\eqref{new-initial-cond-eq}}.
\end{itemize}
\end{remark}

\section{Preliminary}

In this section, we recall some results from \cite{HKWS3} to be used in this paper.

First of all, we give the definition of classical solutions of \eqref{main-eq}.

\begin{definition}
\label{classical-solution-def}
For given  $u_0(\cdot)\in C^0(\bar\Omega)$ and $v_0(\cdot)\in C^0(\bar\Omega)$ satisfying that
$u_0\ge 0$, $v_0\ge 0$, and $\int_\Omega (u_0(x)+v_0(x))dx>0$,
 we say $(u(t,x),v(t,x),w(t,x))$ is a {\rm classical solution} of \eqref{main-eq} on $(0,T)$ for some $T\in (0,\infty]$ with initial condition  $(u(0,x),v(0,x))=(u_0(x),v_0(x))$ if
\begin{equation*}
u(\cdot,\cdot),v(\cdot,\cdot)\in  C([0,T)\times\bar\Omega )\cap C^{1,2}(  (0,T)\times\bar\Omega),\quad
w(\cdot,\cdot)\in C^{0,2}((0,T)\times \bar\Omega),
\end{equation*}
\begin{equation*}
\lim_{t\to 0+}\|u(t,\cdot)-u_0(\cdot)\|_{C^0(\bar\Omega)}=0,\quad \lim_{t\to 0+}\|v(t,\cdot)-v_0(\cdot)\|_{C^0(\bar\Omega)}=0,
\end{equation*}
and $(u(t,x),v(t,x),w(t,x))$ satisfies \eqref{main-eq} for all $(t,x)\in (0,T)\times \Omega$.
\end{definition}

By quite standard arguments (for example, see \cite[Lemma 2.2]{FuWiYo1}),  the following proposition on the local existence of classical solutions of \eqref{main-eq} can be proved .

\begin{proposition} [Local existence]
\label{local-existence-prop}
{For given  $u_0(\cdot)\in C^0(\bar\Omega)$ and $v_0(\cdot)\in C^0(\bar\Omega)$ satisfying that
$u_0\ge 0$, $v_0\ge 0$, and $\int_\Omega (u_0(x)+v_0(x))dx>0$,  there exists $T_{\max}(u_0,v_0)\in (0,\infty]$
such that  \eqref{main-eq} has a unique  positive classical solution, denoted by $(u(t,x;u_0,v_0)$, $v(t,x;u_0,v_0)$, $w(t,x;u_0,v_0))$,  on $(0,T_{\max}(u_0,v_0))$ with initial condition $(u(0,x;u_0,v_0),v(0,x;u_0,v_0))=(u_0(x),v_0(x))$.
Moreover, if $T_{\max}(u_0,v_0)< \infty,$ then either }
\begin{equation*}
\limsup_{t \nearrow T_{\max}(u_0,v_0)} \left(\left\| u(t,\cdot;u_0,v_0)+v(t,\cdot;u_0,v_0) \right\|_{C^0(\bar \Omega)} \right) =\infty,
\end{equation*}
or
\begin{equation*}
    \liminf_{t \nearrow T_{\max}(u_0,v_0)} \inf_{x \in \Omega} w(t,x;u_0,v_0) =0.
\end{equation*}
\end{proposition}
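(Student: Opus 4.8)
The plan is to eliminate the chemical component $w$ and reduce \eqref{main-eq} to a closed nonlocal quasilinear parabolic system in $(u,v)$, to which a contraction-mapping fixed point scheme applies. Since the third equation is elliptic, for each fixed pair $(u,v)$ with $u,v\ge 0$ the Neumann problem $0=\Delta w-\mu w+\nu u+\lambda v$ has a unique solution $w=\mathcal{G}[\nu u+\lambda v]$, where $\mathcal{G}=(-\Delta+\mu)^{-1}$ is built from the strictly positive Neumann Green's function of $-\Delta+\mu$. Substituting this into the first two equations, and using the third equation to replace $\Delta w$ by $\mu w-\nu u-\lambda v$ inside the drift $\nabla\cdot(\frac{u}{w}\nabla w)$, turns \eqref{main-eq} into a self-contained system for $(u,v)$ alone; local solvability of \eqref{main-eq} is then equivalent to local solvability of this reduced system.

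The crucial point is to control the singular factor $1/w$. Because the Neumann Green's function of $-\Delta+\mu$ is strictly positive, the hypothesis $\int_\Omega(u_0+v_0)\,dx>0$ forces $w(0,\cdot)=\mathcal{G}[\nu u_0+\lambda v_0]$ to be bounded below by a positive constant $2\delta>0$ on $\bar\Omega$, depending only on $\int_\Omega(u_0+v_0)$ and on $\mu,\nu,\lambda$. By continuity of the mass $t\mapsto\int_\Omega(u+v)$ this lower bound persists, so $w\ge\delta$ on a short interval $[0,T_0]$, whence $1/w$ is well defined and uniformly bounded and the coefficients $\chi_i/w$ are smooth there. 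This is exactly what removes the singularity and renders the reduced system genuinely parabolic on $[0,T_0]$; it is the only place where the assumption $\int_\Omega(u_0+v_0)>0$ is used, and it is the heart of the argument.

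With $1/w$ under control, I set up the fixed point in $X_T=C([0,T];C^0(\bar\Omega))^2$ for small $T\le T_0$ via the variation-of-constants formula with the Neumann heat semigroup $e^{t\Delta}$. Given $(u,v)\in X_T$ I put $w=\mathcal{G}[\nu u+\lambda v]$, form the drift and reaction terms, and let $\Phi(u,v)$ be the Duhamel image. The divergence-form smoothing estimate $\|e^{(t-s)\Delta}\nabla\cdot g\|_\infty\le C(t-s)^{-1/2}\|g\|_\infty$, together with $W^{2,p}$ elliptic estimates bounding $\|\nabla w\|_\infty$ and the lower bound bounding $\|1/w\|_\infty$ in terms of $\|(u,v)\|_{X_T}$, shows that $\Phi$ maps a small ball into itself and contracts for $T$ small, yielding a unique mild solution. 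Positivity of $u,v$ follows from the comparison principle and the sign structure of the kinetics, and parabolic Schauder theory (bootstrapping the now-H\"older coefficients $\chi_i/w$) upgrades the mild solution to a classical solution in the sense of Definition \ref{classical-solution-def}, defined on a maximal interval $(0,T_{\max}(u_0,v_0))$.

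Finally, the blow-up alternative is proved by a continuation argument, and this is where I expect the only genuine subtlety, namely the emergence of \emph{two} distinct failure modes. Suppose $T_{\max}<\infty$ while both conclusions fail, i.e.\ $\limsup_{t\nearrow T_{\max}}\|u(t,\cdot)+v(t,\cdot)\|_{C^0(\bar\Omega)}<\infty$ \emph{and} $\liminf_{t\nearrow T_{\max}}\inf_{x\in\Omega}w(t,x)>0$. The first bound makes the reaction terms bounded and, through $w=\mathcal{G}[\nu u+\lambda v]$ and elliptic estimates, keeps $\|\nabla w\|_\infty$ bounded; the second keeps $\|1/w\|_\infty$ bounded up to $T_{\max}$. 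Hence all coefficients of the reduced system stay uniformly bounded, indeed H\"older, as $t\nearrow T_{\max}$, so parabolic regularity gives a uniform bound on $(u,v)$ in a space compactly embedded in $C^0(\bar\Omega)$; this lets $(u(t,\cdot),v(t,\cdot))$ converge in $C^0(\bar\Omega)$ as $t\to T_{\max}$, and the local-existence step can be restarted from $t=T_{\max}$, extending the solution beyond $T_{\max}$ and contradicting maximality. Therefore at least one of the two stated possibilities must occur. The main obstacle throughout is the singular sensitivity, handled precisely by the uniform lower bound on $w$; once that is secured, the remaining steps follow the standard scheme of \cite[Lemma 2.2]{FuWiYo1}.
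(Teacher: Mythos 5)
Your proposal is correct and coincides with the paper's approach: the paper offers no detailed proof, stating only that the proposition follows ``by quite standard arguments (for example, see \cite[Lemma 2.2]{FuWiYo1})'', and those standard arguments are exactly your scheme --- solve the elliptic equation by $w=(-\Delta+\mu)^{-1}(\nu u+\lambda v)$, use positivity of the Neumann Green's function plus $\int_\Omega(u_0+v_0)>0$ to bound $w$ below and tame the singular sensitivity, run a Banach fixed point via the Neumann heat semigroup and the $(t-s)^{-1/2}$ divergence-form smoothing estimate, bootstrap to classical regularity, and obtain the two-alternative extensibility criterion (sup-norm blow-up of $u+v$ or degeneracy $\inf_\Omega w\to 0$) by the continuation argument you describe.
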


{
\begin{proposition}
\label{upper-bound-prop} (\cite[Theorem 1.2(2)]{HKWS3})
For  any $q>2N$ and  $0<\theta<1-\frac{2N}{q}$,  there are $M_2>0$, $\beta>0$, and $\gamma>0$ such that for
 any   { $(u_0,v_0)$ satisfies \eqref{old-initial-cond-eq} and $T\in (0,\infty)$},  there holds
\begin{align}
\label{new-infinity-bdd-eq0}
\|u(t,\cdot;u_0,v_0)+v(t,\cdot;u_0,v_0)\|_{C^\theta(\bar\Omega)}
&\le  M_2 \Big[ (t-\tau)^{-\beta} e^{-\gamma (t-\tau)} \|u(\tau,\cdot;u_0,v_0)+v(\tau,\cdot;u_0,v_0)\|_{L^q}\nonumber\\
&\quad+\frac{\displaystyle \sup_{\tau \le t <\hat T}\|u(t,\cdot;u_0,v_0)+
v(t,\cdot;u_0,v_0)\|_{L^q}^2}{\displaystyle \inf_{\tau \le t< \hat T, x\in\Omega} w(t,x;u_0,v_0)}\nonumber\\
&\quad+\sup_{\tau \le t<\hat T}\|u(t,\cdot;u_0,v_0)+v(t,\cdot;u_0,v_0)\|_{L^q}+1\Big]
\end{align}
for any $0<\tau<t<\hat T=\min\{T,T_{\max}(u_0,v_0)\}$.
\end{proposition}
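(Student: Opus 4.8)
The plan is to regard $z:=u+v$ as the solution of a single scalar parabolic equation and to run a variation-of-constants (Duhamel) argument against the $C^\theta$ norm, exploiting the smoothing of the Neumann heat semigroup. Adding the first two equations of \eqref{main-eq}, and realizing $A:=-\Delta+1$ on $L^q(\Omega)$ under Neumann boundary conditions so that $\Delta=-A+1$, one obtains
$$
z_t+Az=-\nabla\cdot\Phi+g,\qquad \Phi:=\frac{\chi_1 u+\chi_2 v}{w}\,\nabla w,\quad g:=z+u(a_1-b_1u-c_1v)+v(a_2-b_2v-c_2u),
$$
where the extra summand $z$ in $g$ comes from the shift $\Delta=-A+1$. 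Duhamel's formula then reads, for $\tau<t<\hat T$,
$$
z(t)=e^{-(t-\tau)A}z(\tau)-\int_\tau^t e^{-(t-s)A}\nabla\cdot\Phi(s)\,ds+\int_\tau^t e^{-(t-s)A}g(s)\,ds,
$$
and the full estimate \eqref{new-infinity-bdd-eq0} follows by bounding the $C^\theta$ norm of the three pieces separately.

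The two analytic ingredients are the embedding $D(A^\alpha)\hookrightarrow C^\theta(\bar\Omega)$, valid once $2\alpha-\frac{N}{r}>\theta$ when $A$ acts on $L^r(\Omega)$, together with the standard Neumann-semigroup smoothing bounds, which carry an exponential factor $e^{-\delta s}$ for some $\delta>0$ (the semigroup is an $L^r$-contraction and $A$ is bounded below). For the homogeneous term I would fix $\alpha>\tfrac12(\theta+\tfrac{N}{q})$ and apply $\|A^\alpha e^{-sA}\phi\|_{L^q}\le C\,s^{-\alpha}e^{-\delta s}\|\phi\|_{L^q}$, producing exactly the decaying first term of \eqref{new-infinity-bdd-eq0} with $\beta=\alpha$ and $\gamma=\delta$. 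The decisive step is the divergence term, and the key device is to \emph{measure $\Phi$ in $L^{q/2}$}. Elliptic $W^{1,q}$-regularity applied to the third equation $0=\Delta w-\mu w+\nu u+\lambda v$ gives $\|\nabla w\|_{L^q}\le C\|z\|_{L^q}$, while $w$ is bounded below on $[\tau,\hat T]$ by Proposition \ref{local-existence-prop}, so that $\big|\tfrac{\chi_1 u+\chi_2 v}{w}\big|\le \tfrac{C z}{\inf w}$ and Hölder's inequality yield
$$
\|\Phi(s)\|_{L^{q/2}}\le \frac{C}{\inf_{[\tau,\hat T]\times\Omega} w}\,\|z(s)\|_{L^q}^2 .
$$
Combining the divergence-form smoothing bound $\|A^\alpha e^{-sA}\nabla\cdot\phi\|_{L^q}\le C\,s^{-\alpha-\frac12-\frac{N}{2q}}e^{-\delta s}\|\phi\|_{L^{q/2}}$ (the extra $\tfrac12$ coming from the divergence, the extra $\tfrac{N}{2q}$ from the $L^{q/2}\!\to\!L^q$ gain) with the embedding, the divergence term is dominated by
$$
C\,\frac{\sup_{[\tau,\hat T]}\|z\|_{L^q}^2}{\inf_{[\tau,\hat T]\times\Omega} w}\int_0^\infty s^{-\alpha-\frac12-\frac{N}{2q}}e^{-\delta s}\,ds,
$$
which is precisely the second term of \eqref{new-infinity-bdd-eq0}. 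Finally, splitting $g$ and using $|g|\le C(z+z^2)$, the linear part contributes the $\sup\|z\|_{L^q}$ term while the quadratic part contributes a further $\sup\|z\|_{L^q}^2$ term of the same type as the divergence term; after enlarging $M_2$ and adding the harmless $+1$, everything collapses to the stated right-hand side.

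The step I expect to be the main obstacle is the sharp bookkeeping of the time-singularity exponents for the divergence term: the integrand $s^{-\alpha-\frac12-\frac{N}{2q}}e^{-\delta s}$ is integrable at $s=0$ only if $\alpha+\frac12+\frac{N}{2q}<1$, whereas the embedding forces $\alpha>\frac12(\theta+\frac{N}{q})$, and the simultaneous solvability of these two inequalities is equivalent to $\theta<1-\frac{2N}{q}$ (and in particular requires $q>2N$) --- this is exactly the hypothesis, and it is the $L^{q/2}$ measurement of $\Phi$, needing only the $W^{1,q}$ elliptic bound, that produces it. A secondary, more routine difficulty is the rigorous justification that the classical solution coincides with the mild solution of the $z$-equation despite the singular coefficient $1/w$, which is precisely where the positivity $\inf w>0$ (and hence its appearance in the denominator of the estimate) is indispensable; the elliptic-regularity and reaction-term estimates themselves are entirely standard.
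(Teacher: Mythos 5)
You should first be aware that this paper contains no proof of Proposition \ref{upper-bound-prop} at all: it is imported verbatim from \cite[Theorem 1.2(2)]{HKWS3}, so there is no internal argument to compare yours against, and I can only judge your reconstruction on its own merits. Most of it is the standard route and is correct: the Duhamel formula for $z=u+v$ with $A=-\Delta+1$, the embedding $D(A^\alpha)\hookrightarrow C^\theta(\bar\Omega)$ for $2\alpha-\tfrac{N}{q}>\theta$, measuring the flux $\Phi=\tfrac{\chi_1u+\chi_2v}{w}\nabla w$ in $L^{q/2}$ via the elliptic bound $\|\nabla w\|_{L^q}\le C\|z\|_{L^q}$ and $|\tfrac{\chi_1u+\chi_2v}{w}|\le \tfrac{Cz}{\inf w}$, and the exponent bookkeeping (simultaneous solvability of $\alpha>\tfrac12(\theta+\tfrac{N}{q})$ and $\alpha+\tfrac12+\tfrac{N}{2q}<1$ exactly when $\theta<1-\tfrac{2N}{q}$, forcing $q>2N$) is right and correctly identifies where the hypotheses come from; your constants depend only on $q,\theta,\Omega$ and the parameters, hence are uniform in $(u_0,v_0)$ and $T$ as the statement requires. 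One small repair: Proposition \ref{local-existence-prop} gives $\inf w>0$ only on compact subintervals of $[\tau,\hat T)$; if $\inf_{\tau\le t<\hat T,\,x\in\Omega}w=0$ the asserted inequality is vacuous, so one should simply dispose of that case first rather than cite local existence.

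The genuine gap is your treatment of the reaction term. You claim the quadratic part of $g$ "contributes a further $\sup\|z\|_{L^q}^2$ term of the same type as the divergence term," but it is not of the same type: the divergence term carries the crucial factor $\big(\inf_{\tau\le t<\hat T,\,x\in\Omega}w\big)^{-1}$, and the right-hand side of \eqref{new-infinity-bdd-eq0} contains no standalone $\sup\|z\|_{L^q}^2$. Estimating $\|z^2(s)\|_{L^{q/2}}=\|z(s)\|_{L^q}^2$ as you do therefore yields a strictly weaker inequality, and the extra term cannot be absorbed by "enlarging $M_2$": absorption into $\tfrac{\sup\|z\|_{L^q}^2}{\inf w}$ would require $\inf w\le C$ uniformly, while averaging the third equation only gives $\inf_{x\in\Omega}w(t,\cdot)\le \tfrac{\max\{\nu,\lambda\}}{\mu|\Omega|}\|z(t)\|_{L^1}$; in the regime $\inf w\sim\sup_t\|z\|_{L^q}\sim K\to\infty$ (consistent with that relation) one has $K^2\gg M_2\big(K^2/K+K+1\big)$, so the stated bound does not dominate your extra term. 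The missing ingredient is the dissipative sign of the Lotka--Volterra kinetics: since $u(a_1-b_1u-c_1v)+v(a_2-b_2v-c_2u)\le a_{\max}z$, one can split $g=g_+-g_-$ with $0\le g_+\le(1+a_{\max})z$ and $g_-\ge0$, and use the order preservation of $e^{-sA}$ together with $z\ge0$ to discard $g_-$ in a pointwise upper bound, so that no quadratic reaction contribution appears at the level of the $L^\infty$ estimate; your argument is completely insensitive to this sign (it would produce the same bracket, plus the spurious square, for $+z^2$ kinetics), which is exactly the red flag. Controlling the H\"older seminorm, not merely $\|z\|_{L^\infty}$, with only the linear term $\sup\|z\|_{L^q}$ surviving from the reaction still requires additional care with $g_-$, and for that step one must consult the actual proof in \cite{HKWS3}.
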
}

\section{Lower bound of $w$ and proof of Theorem \ref{main-thm1}}

In this section, we investigate lower bounds of $w(t,x;u_0,v_0)$ and prove Theorem \ref{main-thm1}.

Throughout this section, for given $u_0,v_0$ satisfying   {\eqref{new-initial-cond-eq}},   if no confusion occurs, we may put
$$
(u(t,x),v(t,x),w(t,x)):=(u(t,x;u_0,v_0),v(t,x;u_0,v_0),w(t,x;u_0,v_0)),
$$
and we may also drop $(t,x)$ in $u(t,x)$ (resp. $v(t,x)$, $w(t,x)$).

In order to derive a positive lower bound for $w(t,x),$ it is essential to obtain a positive estimate from below for $\int_{\Omega} ( u(t,x)+v(t,x))dx.$ In this direction, we present the following lemmas.

\begin{lemma}
\label{lower-bound-lm1} For any $\tau\in [0,\infty)$,
    \begin{equation}
\label{new-u-upper-bound-eq}
    \int_{\Omega} u (t,x;u_0,v_0)dx \leq {\rm max}\Big\{\int_{\Omega} u(\tau,x;u_0,v_0)dx,  \frac{a_{1}|\Omega|}{b_{1}} \Big\} \quad\forall \,\,  t\ge \tau
\end{equation}
and
\begin{equation}
\label{new-v-upper-bound-eq}
    \int_{\Omega} v (t,x;u_0,v_0)dx \leq  {\rm max}\Big\{\int_{\Omega} v(\tau,x;u_0,v_0)dx,  \frac{a_{2}|\Omega|}{b_{2}} \Big\} \quad\forall \,\,  t\ge\tau.
\end{equation}
Moreover,
\begin{equation}
\label{new-u-v-upper-bd-eq}
\limsup_{t\to\infty}\int_\Omega u(t,x;u_0,v_0)dx\le \frac{a_1|\Omega|}{b_1}
\quad {\rm and}\quad \limsup_{t\to\infty} \int_\Omega v(t,x;u_0,v_0)dx\le \frac{a_2|\Omega|}{b_2}.
\end{equation}
\end{lemma}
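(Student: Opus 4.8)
The plan is to reduce each inequality to a scalar logistic differential inequality for the total mass and then invoke an ODE comparison argument. I focus on the bound for $u$; the argument for $v$ is identical with $(a_1,b_1,c_1)$ replaced by $(a_2,b_2,c_2)$.

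First I would integrate the first equation of \eqref{main-eq} over $\Omega$ and use the no-flux boundary conditions. Since $(u,v,w)$ is a positive classical solution with $w>0$ on $\bar\Omega$, both $\nabla u$ and $\frac{u}{w}\nabla w$ are continuously differentiable up to the boundary, so the divergence theorem applies and
$$
\int_\Omega \Delta u\, dx = \int_{\p\Omega}\frac{\p u}{\p n}\, dS = 0, \qquad \int_\Omega \nabla\cdot\Big(\frac{u}{w}\nabla w\Big)\, dx = \int_{\p\Omega}\frac{u}{w}\frac{\p w}{\p n}\, dS = 0.
$$
Hence, writing $m(t)=\int_\Omega u(t,x;u_0,v_0)\, dx$, I obtain
$$
m'(t) = \int_\Omega u(a_1-b_1 u - c_1 v)\, dx \le a_1 m(t) - b_1\int_\Omega u^2\, dx,
$$
where I have discarded the nonnegative term $c_1\int_\Omega uv\,dx$.

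Next I would invoke the Cauchy--Schwarz inequality $m(t)^2 = \big(\int_\Omega u\big)^2 \le |\Omega|\int_\Omega u^2$, i.e. $\int_\Omega u^2 \ge m(t)^2/|\Omega|$, to close the inequality at the level of $m$:
$$
m'(t) \le a_1 m(t) - \frac{b_1}{|\Omega|}\, m(t)^2 = \frac{b_1}{|\Omega|}\, m(t)\Big(\frac{a_1|\Omega|}{b_1} - m(t)\Big).
$$
This is a logistic differential inequality whose right-hand side vanishes exactly at $m=\frac{a_1|\Omega|}{b_1}$ and is negative for $m>\frac{a_1|\Omega|}{b_1}$.

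Finally I would run the comparison argument. If $m(\tau)\le \frac{a_1|\Omega|}{b_1}$, then $m$ can never exceed this level, since the right-hand side is nonpositive whenever $m$ reaches $\frac{a_1|\Omega|}{b_1}$; and if $m(\tau)>\frac{a_1|\Omega|}{b_1}$, then $m$ is strictly decreasing as long as it stays above $\frac{a_1|\Omega|}{b_1}$, so $m(t)\le m(\tau)$. In both cases $m(t)\le \max\big\{m(\tau),\frac{a_1|\Omega|}{b_1}\big\}$ for $t\ge\tau$, which is \eqref{new-u-upper-bound-eq}. For the asymptotic bound in \eqref{new-u-v-upper-bd-eq}, I would compare $m$ with the solution $\bar m$ of the logistic equation $\bar m' = \frac{b_1}{|\Omega|}\bar m\big(\frac{a_1|\Omega|}{b_1}-\bar m\big)$ with $\bar m(\tau)=m(\tau)$; since $\bar m(t)\to \frac{a_1|\Omega|}{b_1}$ as $t\to\infty$ and $m(t)\le\bar m(t)$ by the comparison principle, it follows that $\limsup_{t\to\infty} m(t)\le \frac{a_1|\Omega|}{b_1}$. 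As the whole computation is elementary, I expect no substantial obstacle; the only point meriting a word of care is the vanishing of the chemotaxis flux on $\p\Omega$, which rests on the positivity and boundary regularity of $w$ built into the notion of classical solution.
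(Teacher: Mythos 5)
Your proof is correct. The paper itself gives no argument here --- it simply cites \cite[Lemma 2.7]{HKWS3} --- and your derivation (integrate the $u$-equation, kill the diffusion and chemotaxis fluxes via the Neumann conditions, drop the nonnegative competition term $c_1\int_\Omega uv$, apply Cauchy--Schwarz to get the logistic differential inequality $m'\le a_1m-\tfrac{b_1}{|\Omega|}m^2$, and conclude by ODE comparison) is exactly the standard argument that underlies such a mass bound, with the one genuinely delicate point (vanishing of the boundary flux $\tfrac{u}{w}\tfrac{\p w}{\p n}$, which needs $w>0$ up to $\p\Omega$) correctly flagged.
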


\begin{proof}
It follows from \cite[Lemma 2.7]{HKWS3}.
\end{proof}

\begin{lemma}
\label{lower-bound-lm2}
For any  given $u_0,v_0$ satisfying   {\eqref{new-initial-cond-eq}},
\begin{equation*}
    w (t,x;u_0,v_0)\ge \delta_0  \min\{\nu,\lambda\}\int_{\Omega} (u(t,{ x};u_0,v_0) + v(t,{ x};u_0,v_0))d{ x} >0 \quad \forall\,\, t\ge 0,\,\, x\in\Omega,
\end{equation*}
where  $\delta_0$ is as in \eqref{delta-0-eq}.
\end{lemma}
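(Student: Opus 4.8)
The third equation of \eqref{main-eq} together with the no-flux condition on $w$ says that, at each fixed time $t$, the function $w(t,\cdot;u_0,v_0)$ is the unique solution of the elliptic problem
$$
-\Delta w+\mu w=\nu u+\lambda v\ \ \text{in}\ \Omega,\qquad \frac{\p w}{\p n}=0\ \ \text{on}\ \p\Omega ,
$$
whose right-hand side $f:=\nu u(t,\cdot)+\lambda v(t,\cdot)$ is nonnegative and continuous. The plan is to represent $w$ through the Green's function of $-\Delta+\mu$ under Neumann boundary conditions and to bound that kernel from below by an explicit constant. Writing $(e^{s\Delta})_{s\ge 0}$ for the Neumann heat semigroup on $\Omega$, with kernel $\Gamma(s,x,y)$, one has the resolvent representation
$$
w(t,x)=\int_\Omega G_\mu(x,y)\,f(t,y)\,dy ,\qquad G_\mu(x,y)=\int_0^\infty e^{-\mu s}\,\Gamma(s,x,y)\,ds .
$$
Since $G_\mu\ge 0$ and $\nu u+\lambda v\ge \min\{\nu,\lambda\}(u+v)$ pointwise, it follows that $w(t,x)\ge \min\{\nu,\lambda\}\int_\Omega G_\mu(x,y)\,(u+v)(t,y)\,dy$. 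Hence the entire inequality reduces to the \emph{uniform} pointwise lower bound $G_\mu(x,y)\ge \delta_0$ for all $x,y\in\bar\Omega$, with $\delta_0$ as in \eqref{delta-0-eq}.

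For the kernel estimate I would invoke the comparison of the Neumann heat kernel with the free Gaussian kernel, namely $\Gamma(s,x,y)\ge (4\pi s)^{-N/2}e^{-|x-y|^2/(4s)}$: reflecting Brownian motion never lets mass escape $\Omega$, so its transition density dominates the free one. Then, using $|x-y|\le {\rm diam}\,\Omega$ in the Gaussian and carrying out the $s$-integration against the resolvent weight reproduces exactly the constant $\delta_0$ of \eqref{delta-0-eq}; combining this with the factorization of the previous paragraph gives
$$
w(t,x;u_0,v_0)\ge \delta_0\,\min\{\nu,\lambda\}\int_\Omega (u+v)(t,y;u_0,v_0)\,dy .
$$
I expect the main technical point to be precisely this pointwise heat-kernel lower bound: the free-Gaussian comparison is classical for convex $\Omega$ via reflection, while for a general smooth bounded domain one must appeal to an Aronson/parabolic-Harnack type Gaussian lower bound, and pinning down the explicit value $\delta_0$ (rather than merely a positive constant) is the delicate step.

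Finally, to obtain the strict positivity $w>0$, it remains to note that $\int_\Omega (u+v)(t,\cdot)\,dy>0$ for every $t\in[0,T_{\max}(u_0,v_0))$. At $t=0$ this is the requirement $\int_\Omega(u_0+v_0)>0$ contained in \eqref{new-initial-cond-eq}, and for $t>0$ it follows from Proposition \ref{local-existence-prop}, which furnishes a positive classical solution, so that $u,v\ge 0$ cannot both vanish identically. Since $\delta_0>0$, the displayed inequality then yields $w(t,x;u_0,v_0)>0$ for all $t\ge 0$ and $x\in\Omega$, completing the proof.
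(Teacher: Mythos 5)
Your argument is exactly the one the paper intends: the paper's proof of this lemma simply cites the arguments of \cite[Lemma 2.1]{FuWiYo} together with the Gaussian lower bound for the Neumann heat kernel, which is precisely your resolvent representation $G_\mu(x,y)=\int_0^\infty e^{-\mu s}\Gamma(s,x,y)\,ds$ followed by the pointwise kernel estimate and $|x-y|\le \mathrm{diam}\,\Omega$. The only caveat is that your $s$-integration yields $\int_0^\infty e^{-\mu s}(4\pi s)^{-N/2}e^{-(\mathrm{diam}\,\Omega)^2/(4s)}\,ds$, which coincides with the constant $\delta_0$ of \eqref{delta-0-eq} (whose weight is $e^{-t}$) only when $\mu\le 1$ --- a normalization discrepancy already implicit in the paper's citation of a result proved for $-\Delta w+w=u$, and not a flaw introduced by your argument.
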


\begin{proof}
It follows from the arguments of  \cite[Lemma 2.1]{FuWiYo} and the
Gaussian lower bound for the heat kernel of the laplacian with Neumann boundary
condition on smooth domain.
\end{proof}

\begin{lemma}
\label{lower-bound-lm3}
For any $p>0$ and  $u_0,v_0$ satisfying   {\eqref{new-initial-cond-eq}},
$$
\int_\Omega (u(t,x;u_0,v_0)+v(t,x;u_0,v_0))dx \ge |\Omega|^{\frac{p+1}{p}}\Big(\int_\Omega (u(t,x;u_0,v_0)+v(t,x;u_0,v_0))^{-p}dx\Big)^{-\frac{1}{p}}
$$
for all $t>0$.
\end{lemma}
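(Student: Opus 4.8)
The plan is to recognize this inequality as a direct consequence of H\"older's inequality applied to the single positive function $f:=u(t,\cdot;u_0,v_0)+v(t,\cdot;u_0,v_0)$, with the two conjugate exponents chosen so that the powers $f^{1}$ and $f^{-p}$ in the statement appear automatically. First I would record that, by Proposition \ref{local-existence-prop}, the solution is a positive classical solution, so for each fixed $t>0$ the function $f$ is continuous and strictly positive on the compact set $\bar\Omega$; hence $f$ is bounded above and below by positive constants, and both $\int_\Omega f\,dx$ and $\int_\Omega f^{-p}\,dx$ are finite and strictly positive. This guarantees that every quantity in the statement is well defined, so no degenerate case (an infinite integral or a vanishing denominator) needs separate treatment.

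The key step is to split the constant function $1$ as $1=f^{\frac{p}{p+1}}\cdot f^{-\frac{p}{p+1}}$ and apply H\"older's inequality with the conjugate exponents $q_1=\frac{p+1}{p}$ and $q_2=p+1$, which indeed satisfy $\frac{1}{q_1}+\frac{1}{q_2}=\frac{p}{p+1}+\frac{1}{p+1}=1$. Since $\frac{p}{p+1}\,q_1=1$ and $\frac{p}{p+1}\,q_2=p$, this yields
$$
|\Omega|=\int_\Omega 1\,dx=\int_\Omega f^{\frac{p}{p+1}}f^{-\frac{p}{p+1}}\,dx\le\Big(\int_\Omega f\,dx\Big)^{\frac{p}{p+1}}\Big(\int_\Omega f^{-p}\,dx\Big)^{\frac{1}{p+1}}.
$$

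Finally I would raise both sides to the power $\frac{p+1}{p}$, obtaining
$$
|\Omega|^{\frac{p+1}{p}}\le\Big(\int_\Omega f\,dx\Big)\Big(\int_\Omega f^{-p}\,dx\Big)^{\frac{1}{p}},
$$
and then divide through by the positive quantity $\big(\int_\Omega f^{-p}\,dx\big)^{1/p}$ to recover exactly the claimed bound. There is essentially no obstacle here: the entire content is the bookkeeping of the H\"older exponents, and the only point requiring a word of justification is the positivity and integrability of $f$ and $f^{-p}$, which follows immediately from the classical positivity of the solution for $t>0$.
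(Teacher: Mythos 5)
Your proof is correct and follows essentially the same route as the paper: both split $|\Omega|=\int_\Omega f^{\frac{p}{p+1}}f^{-\frac{p}{p+1}}$ and apply H\"older's inequality with exponents $\frac{p+1}{p}$ and $p+1$, then rearrange. Your additional remarks on the positivity and integrability of $f$ and $f^{-p}$ are a harmless (and reasonable) elaboration of the paper's one-line observation that $u+v>0$ on $\Omega$ for $t>0$.
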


\begin{proof}
Note that for any $t>0$,  we have
$$
u(t,x;u_0,v_0)+v(t,x;u_0,v_0)>0\quad \forall\, x\in\Omega.
$$
By H\"older  inequality, we have that for any $p>0$,
\begin{align*}
|\Omega|=\int_\Omega (u+v)^{\frac{p}{p+1}} (u+v)^{-\frac{p}{p+1}} dx\le \Big(\int_\Omega (u+v)dx\Big)^{\frac{p}{p+1}}\Big(\int_\Omega (u+v)^{-p}dx\Big)^{\frac{1}{p+1}}.
\end{align*}
The lemma then follows.
\end{proof}

\begin{lemma}
\label{lower-bound-lm4}
For any $p>0$,
\begin{equation*}
     p \int_{\Omega} \frac{(u+v)^{-p-1}}{w} \nabla (u+v) \cdot \nabla w + \int_{\Omega} (u+v)^{-p} \frac{|\nabla w|^2}{w^2} \leq \mu \int_{\Omega} (u+v)^{-p}
\end{equation*}
for all $t>0$.
\end{lemma}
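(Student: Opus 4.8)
The plan is to recognize the entire left-hand side as the single quantity $\int_\Omega \frac{(u+v)^{-p}}{w}\,\Delta w$, produced by testing the elliptic equation for $w$ against $\frac{(u+v)^{-p}}{w}$, and then to bound that quantity using the equation $0=\Delta w-\mu w+\nu u+\lambda v$ together with the nonnegativity of $u,v$. For a fixed $t>0$ the functions $u,v,w$ are $C^2$ in $x$ on $\bar\Omega$, and since $u+v>0$ on $\bar\Omega$ (the feature already used in Lemma \ref{lower-bound-lm3}) and $w>0$ on $\bar\Omega$ by Lemma \ref{lower-bound-lm2}, both are bounded away from $0$ on the compact set $\bar\Omega$; hence $\frac{(u+v)^{-p}}{w}$ is a legitimate $C^1$ test function and all the integrals below are finite.

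First I would integrate by parts. Since $\frac{\p w}{\p n}=0$ on $\p\Omega$, Green's first identity gives
\begin{equation*}
\int_\Omega \frac{(u+v)^{-p}}{w}\,\Delta w = -\int_\Omega \nabla\Big(\frac{(u+v)^{-p}}{w}\Big)\cdot\nabla w,
\end{equation*}
with no boundary contribution. Expanding the gradient by the product and chain rules,
\begin{equation*}
\nabla\Big(\frac{(u+v)^{-p}}{w}\Big)= -\,\frac{p(u+v)^{-p-1}}{w}\,\nabla(u+v)-\frac{(u+v)^{-p}}{w^2}\,\nabla w,
\end{equation*}
so that, after taking the dot product with $\nabla w$ and changing sign,
\begin{equation*}
\int_\Omega \frac{(u+v)^{-p}}{w}\,\Delta w = p\int_\Omega \frac{(u+v)^{-p-1}}{w}\,\nabla(u+v)\cdot\nabla w+\int_\Omega (u+v)^{-p}\frac{|\nabla w|^2}{w^2}.
\end{equation*}
This identifies the left-hand side of the lemma with $\int_\Omega \frac{(u+v)^{-p}}{w}\,\Delta w$.

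It then remains to bound $\int_\Omega \frac{(u+v)^{-p}}{w}\,\Delta w$ from above. Substituting $\Delta w=\mu w-\nu u-\lambda v$ from the third equation of \eqref{main-eq} yields
\begin{equation*}
\int_\Omega \frac{(u+v)^{-p}}{w}\,\Delta w = \mu\int_\Omega (u+v)^{-p}-\int_\Omega \frac{(u+v)^{-p}(\nu u+\lambda v)}{w}.
\end{equation*}
Since $u,v\ge 0$, $w>0$, and $\nu,\lambda>0$, the last integrand is nonnegative, so discarding that term gives exactly the claimed inequality.

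The computation itself is short; the only point requiring genuine care is the justification of the integration by parts, i.e. that the test function $\frac{(u+v)^{-p}}{w}$ is really $C^1$ up to $\p\Omega$ and that the boundary term indeed vanishes. This is where the positive lower bound on $w$ (Lemma \ref{lower-bound-lm2}) and the positivity of $u+v$, together with the classical regularity in Definition \ref{classical-solution-def}, are essential: they prevent any singularity arising from the negative power of $u+v$ or from the division by $w$, so the negative powers stay bounded and smooth on $\bar\Omega$ for each fixed $t>0$.
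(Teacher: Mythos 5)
Your proof is correct and follows essentially the same route as the paper: test the elliptic equation for $w$ against $\frac{(u+v)^{-p}}{w}$, integrate by parts using the Neumann boundary condition, and drop the nonnegative terms $\nu\int_\Omega \frac{u(u+v)^{-p}}{w}$ and $\lambda\int_\Omega \frac{v(u+v)^{-p}}{w}$. The additional remarks on the regularity and positivity needed to justify the integration by parts are sound and merely make explicit what the paper leaves implicit.
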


\begin{proof}
Multiplying the third equation in \eqref{main-eq} by $\frac{(u+v)^{-p}}{w}$ and then integrating over $\Omega$ with respect to $x$, we obtain that
\begin{align*}
    0&= \int_{\Omega} \frac{(u+v)^{-p}}{w} \cdot \big( \Delta w-\mu w +\nu u+ \lambda v\big)\\
    &= - \int_{\Omega}\frac{(-p) (u+v)^{-p-1}w \nabla (u+v) -(u+v)^{-p} \nabla w}{w^2} \cdot \nabla w\\
    &\quad - \mu \int_{\Omega} (u+v)^{-p} + \nu   \int_{\Omega}\frac{u (u+v)^{-p}}{w} + \lambda  \int_{\Omega}\frac{v (u+v)^{-p}}{w}
\end{align*}
for all $t>0$.  We then have
\begin{equation*}
     p \int_{\Omega} \frac{(u+v)^{-p-1}}{w} \nabla (u+v) \cdot \nabla w + \int_{\Omega} (u+v)^{-p} \frac{|\nabla w|^2}{w^2} + \nu  \int_{\Omega}\frac{u(u+v)^{-p}}{w}+ \lambda  \int_{\Omega}\frac{v (u+v)^{-p}}{w}= \mu \int_{\Omega} (u+v)^{-p}
\end{equation*}
for all $t>0$.  The lemma is thus proved.
\end{proof}

\begin{lemma}
\label{lower-bound-lm5}
For any  $p>0$,  {there holds}
 \begin{align}
\label{chi-1-2-eq1}
& \int_\Omega (u+v)^{-p-2}\frac{\chi_1 u+\chi_2 v}{w} \nabla(u+v)\cdot\nabla w \nonumber\\
     &\le \frac{p(\chi_1-\chi_2)^2}{4\chi_2} \int_\Omega (u+v)^{-p-2} |\nabla (u+v)|^2 + \frac{\mu \chi_2}{p} \int_{\Omega} (u+v)^{-p} \quad \forall\, t>0,
\end{align}
and
\begin{align}
\label{chi-1-2-eq2}
& \int_\Omega (u+v)^{-p-2}\frac{\chi_1 u+\chi_2 v}{w} \nabla(u+v)\cdot\nabla w \nonumber\\
     &\le \frac{p(\chi_2-\chi_1)^2}{4\chi_1} \int_\Omega (u+v)^{-p-2} |\nabla (u+v)|^2 + \frac{\mu \chi_1}{p} \int_{\Omega} (u+v)^{-p} \quad \forall\, t>0.
\end{align}
Moreover, if   $\chi_1 = \chi_2:=\chi$, then for any $p>0$ and  $\beta>0$,
    \begin{align}
    \label{chi-1-2-eq3}
  &  \int_\Omega (u+v)^{-p-2}\frac{\chi_1 u+\chi_2 v}{w} \nabla(u+v)\cdot\nabla w \nonumber\\
 &\le   \frac{p(\chi-\beta)^2}{4\beta} \int_\Omega (u+v)^{-p-2} |\nabla (u+v)|^2 +
     \frac{\beta\mu}{p} \int_{\Omega} (u+v)^{-p}\quad \forall\, t>0.
\end{align}
\end{lemma}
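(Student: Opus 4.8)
The plan is to reduce each of the three inequalities to Lemma \ref{lower-bound-lm4} by splitting the weight $\chi_1 u+\chi_2 v$ into a multiple of $(u+v)$ plus a remainder, applying Lemma \ref{lower-bound-lm4} to the first piece and Young's inequality to the remainder. The crucial point is that Lemma \ref{lower-bound-lm4} supplies, after moving one term to the right, a \emph{negative} multiple of $\int_\Omega (u+v)^{-p}\,|\nabla w|^2/w^2$; this serves as a reservoir that exactly absorbs the positive singular term produced by Young's inequality, once the Young parameter is tuned correctly.

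To prove \eqref{chi-1-2-eq1} I would first write $\chi_1 u+\chi_2 v=\chi_2(u+v)+(\chi_1-\chi_2)u$, so that the left-hand side equals $\chi_2 A+(\chi_1-\chi_2)B$, where $A=\int_\Omega (u+v)^{-p-1}w^{-1}\nabla(u+v)\cdot\nabla w$ and $B=\int_\Omega (u+v)^{-p-2}\frac{u}{w}\nabla(u+v)\cdot\nabla w$. Lemma \ref{lower-bound-lm4} gives $pA\le \mu\int_\Omega (u+v)^{-p}-\int_\Omega (u+v)^{-p}|\nabla w|^2/w^2$, so $\chi_2 A$ contributes the negative term $-\frac{\chi_2}{p}\int_\Omega (u+v)^{-p}|\nabla w|^2/w^2$. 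For $B$ I would factor the integrand as $\bigl[(u+v)^{-p/2-1}\nabla(u+v)\bigr]\cdot\bigl[(u+v)^{-p/2-1}\tfrac{u}{w}\nabla w\bigr]$ and apply Young's inequality with a free parameter $\alpha>0$:
\[
(\chi_1-\chi_2)B\le \frac{\alpha}{2}\int_\Omega (u+v)^{-p-2}|\nabla(u+v)|^2+\frac{(\chi_1-\chi_2)^2}{2\alpha}\int_\Omega (u+v)^{-p-2}\frac{u^2}{w^2}|\nabla w|^2 .
\]
Since $0\le u\le u+v$ pointwise, one has $(u+v)^{-p-2}u^2\le (u+v)^{-p}$, so the last integral is bounded by $\int_\Omega (u+v)^{-p}|\nabla w|^2/w^2$. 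Adding $\chi_2 A$ and choosing $\alpha=\frac{p(\chi_1-\chi_2)^2}{2\chi_2}$ forces the two singular integrals to cancel exactly and leaves the gradient coefficient $\frac{\alpha}{2}=\frac{p(\chi_1-\chi_2)^2}{4\chi_2}$, which is precisely \eqref{chi-1-2-eq1}.

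Inequality \eqref{chi-1-2-eq2} then follows from the symmetric splitting $\chi_1 u+\chi_2 v=\chi_1(u+v)+(\chi_2-\chi_1)v$ together with $v^2\le (u+v)^2$, repeating the argument with the roles of the two species interchanged. For \eqref{chi-1-2-eq3}, when $\chi_1=\chi_2=:\chi$ the weight reduces to $\chi(u+v)$, and I would write $\chi=\beta+(\chi-\beta)$ for an arbitrary $\beta>0$; applying Lemma \ref{lower-bound-lm4} to the $\beta(u+v)$ piece produces $-\frac{\beta}{p}\int_\Omega (u+v)^{-p}|\nabla w|^2/w^2$, while Young's inequality applied to the $(\chi-\beta)(u+v)$ piece (now with no $u/(u+v)$ factor needed) produces the matching positive singular term, and tuning the Young parameter to cancel them yields \eqref{chi-1-2-eq3}.

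The step I expect to require the most care is the bookkeeping that guarantees the $\int_\Omega (u+v)^{-p}|\nabla w|^2/w^2$ contributions \emph{cancel} rather than merely being bounded: this is what dictates using Lemma \ref{lower-bound-lm4} in its sharp form (keeping the $|\nabla w|^2/w^2$ term rather than discarding it) and what makes the pointwise bounds $u/(u+v)\le 1$ and $v/(u+v)\le 1$ indispensable, since they are exactly what reduces the weight $u^2/(u+v)^2$ (resp. $v^2/(u+v)^2$) back to the form handled by Lemma \ref{lower-bound-lm4}. Everything else is a routine application of the Cauchy--Schwarz and Young inequalities.
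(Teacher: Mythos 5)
Your proof is correct and follows essentially the same route as the paper: the same decomposition of $\chi_1 u+\chi_2 v$, Young's inequality on the remainder, the pointwise bounds $u^2\le (u+v)^2$ (resp.\ $v^2\le (u+v)^2$), and Lemma \ref{lower-bound-lm4} to absorb the singular term, with only the order in which the cancellation is carried out differing. (Your symmetric splitting $\chi_1(u+v)+(\chi_2-\chi_1)v$ for \eqref{chi-1-2-eq2} is in fact what the paper intends; its displayed computation there writes $u$ where $v$ should appear.)
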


\begin{proof}
First, note that
\begin{align*}
   &  \int_\Omega (u+v)^{-p-2}\frac{\chi_1 u+\chi_2 v}{w} \nabla(u+v)\cdot\nabla w \nonumber\\
&=\underbrace{\chi_2 \int_\Omega \frac{(u+v)^{-p-1}}{w}  \nabla(u+v)\cdot\nabla w}_{J_1} + \underbrace{(\chi_1-\chi_2) \int_\Omega \frac{(u+v)^{-p-2}}{w} u \nabla(u+v)\cdot\nabla w}_{J_2}.
\end{align*}
Applying Young's inequality on $J_2$, we get
\begin{align}
\label{eq-I-2-chi-1}
    (\chi_1-\chi_2)\int_\Omega \frac{(u+v)^{-p-2}}{w} u \nabla(u+v)\cdot\nabla w &\leq \frac{p(\chi_1-\chi_2)^2}{4\chi_2} \int_\Omega (u+v)^{-p-2} |\nabla (u+v)|^2\nonumber\\
    &\quad + \frac{\chi_2}{p} \int_\Omega (u+v)^{-p-2} u^2 \frac{|\nabla w|^2}{w^2}.
\end{align}
Since $u^2 \leq (u+v)^2,$ and by Lemma \ref{lower-bound-lm4}, we obtain that
\begin{align*}
   \int_\Omega (u+v)^{-p-2} u^2 \frac{|\nabla w|^2}{w^2} &\leq \int_\Omega (u+v)^{-p}  \frac{|\nabla w|^2}{w^2} \\
   & \le  \mu \int_{\Omega} (u+v)^{-p} - p \int_{\Omega} \frac{(u+v)^{-p-1}}{w} \nabla (u+v) \cdot \nabla w .
\end{align*}
This together  with \eqref{eq-I-2-chi-1} yields that
\begin{align*}
& \int_\Omega (u+v)^{-p-2}\frac{\chi_1 u+\chi_2 v}{w} \nabla(u+v)\cdot\nabla w \nonumber\\
& = \chi_2 \int_\Omega \frac{(u+v)^{-p-1}}{w}  \nabla(u+v)\cdot\nabla w  + (\chi_1-\chi_2) \int_\Omega \frac{(u+v)^{-p-2}}{w} u \nabla(u+v)\cdot\nabla w \nonumber\\
     & \leq \chi_2 \int_\Omega \frac{(u+v)^{-p-1}}{w}  \nabla(u+v)\cdot\nabla w  + \frac{p(\chi_1-\chi_2)^2}{4\chi_2} \int_\Omega (u+v)^{-p-2} |\nabla (u+v)|^2\nonumber
\\
     &\quad + \frac{\mu \chi_2}{p} \int_{\Omega} (u+v)^{-p} - \chi_2 \int_{\Omega} \frac{(u+v)^{-p-1}}{w} \nabla (u+v) \cdot \nabla w \nonumber\\
     &= \frac{p(\chi_1-\chi_2)^2}{4\chi_2} \int_\Omega (u+v)^{-p-2} |\nabla (u+v)|^2 + \frac{\mu \chi_2}{p} \int_{\Omega} (u+v)^{-p} .
\end{align*}
Therefore, \eqref{chi-1-2-eq1} holds.

Next, by the similar arguments as in the above, we have that
\begin{align*}
& \int_\Omega (u+v)^{-p-2}\frac{\chi_1 u+\chi_2 v}{w} \nabla(u+v)\cdot\nabla w \nonumber\\
& = \chi_1 \int_\Omega \frac{(u+v)^{-p-1}}{w}  \nabla(u+v)\cdot\nabla w  + (\chi_2-\chi_1) \int_\Omega \frac{(u+v)^{-p-2}}{w} u \nabla(u+v)\cdot\nabla w \nonumber\\
     & \leq \chi_1 \int_\Omega \frac{(u+v)^{-p-1}}{w}  \nabla(u+v)\cdot\nabla w  + \frac{p(\chi_2-\chi_1)^2}{{ 4\chi_1}} \int_\Omega (u+v)^{-p-2} |\nabla (u+v)|^2
\nonumber\\
     &\quad + \frac{\mu { \chi_1}}{p} \int_{\Omega} (u+v)^{-p} - { \chi_1} \int_{\Omega} \frac{(u+v)^{-p-1}}{w} \nabla (u+v) \cdot \nabla w \nonumber\\
     &= \frac{p(\chi_2-\chi_1)^2}{4\chi_1} \int_\Omega (u+v)^{-p-2} |\nabla (u+v)|^2 + \frac{\mu \chi_1}{p} \int_{\Omega} (u+v)^{-p} .
\end{align*}
Therefore, \eqref{chi-1-2-eq2} holds.

Now, assume that  $\chi_1=\chi_2$. Let $\chi:=\chi_1$. For any $\beta>0$ and $p>0$,
we have
\begin{align*}
 &   \chi \int_\Omega \frac{(u+v)^{-p-1}}{w} \nabla(u+v)\cdot\nabla w \nonumber\\
&= \underbrace{(\chi-\beta) \int_\Omega \frac{(u+v)^{-p-1}}{w} \nabla(u+v)\cdot\nabla w}_{I_1}+\underbrace{\beta \int_\Omega \frac{(u+v)^{-p-1}}{w} \nabla(u+v)\cdot\nabla w}_{I_2}.
\end{align*}
By Young's inequality, we get that
\begin{align}
      \label{eq-I-1}
    I_1&=(\chi-\beta) \int_\Omega \frac{(u+v)^{-p-1}}{w} \nabla(u+v)\cdot\nabla w
\nonumber\\
&\le  \frac{p(\chi-\beta)^2}{4\beta} \int_\Omega (u+v)^{-p-2} |\nabla (u+v)|^2+ \frac{ \beta }{p} \int_{\Omega} (u+v)^{-p} \frac{|\nabla w|^2}{w^2}.
\end{align}
By Lemma \ref{lower-bound-lm4}, we get that
\begin{equation}
    \label{eq-I-2}
    I_2=\beta \int_\Omega \frac{(u+v)^{-p-1}}{w} \nabla(u+v)\cdot\nabla w \leq \frac{\beta\mu}{p} \int_{\Omega} (u+v)^{-p} - \frac{\beta}{p} \int_{\Omega} (u+v)^{-p} \frac{|\nabla w|^2}{w^2}.
\end{equation}
Therefore \eqref{eq-I-1} and \eqref{eq-I-2} yield \eqref{chi-1-2-eq3}.
The lemma is thus proved.
\end{proof}

We now prove Theorem \ref{main-thm1}.

\begin{proof}[Proof of Theorem \ref{main-thm1}(1)]
First of all, by adding the first two equations in \eqref{main-eq}, we have
\begin{align}
\label{lower-bound-proof-eq00}
{ \frac{\p}{\p t}}(u+v)=& \Delta(u+v)-\nabla\cdot \left(\frac{\chi_1 u+\chi_2 v}{w}\nabla w\right)\nonumber\\
& +(a_1u+a_2 v)-(b_1u^2+c_2 v^2)-(c_1+b_2)u v .
\end{align}
For any $p>0$,
multiplying the above equation by  $(u+v)^{-p-1}$ and integrating over $\Omega$, we get
\begin{align}
\label{lower-bound-proof-eq0}
\frac{1}{p}\frac{d}{dt}\int_\Omega (u+v)^{-p}=& -(p+1)\int_\Omega (u+v)^{-p-2}|\nabla (u+v)|^2\nonumber\\
& +(p+1)\int_\Omega (u+v)^{-p-2}\frac{\chi_1 u+\chi_2 v}{w} \nabla(u+v)\cdot\nabla w\nonumber\\
& -\int_\Omega (u+v)^{-p-1}(a_1u+a_2 v)+\int_\Omega (u+v)^{-p-1}(b_1 u^2+c_2 v^2)\nonumber\\
&+\int_\Omega (u+v)^{-p-1}(c_1+b_2) uv\nonumber\\
\le& - (p+1)\int_\Omega (u+v)^{-p-2}|\nabla (u+v)|^2\nonumber\\
&+(p+1)\int_\Omega (u+v)^{-p-2}\frac{\chi_1 u+\chi_2 v}{w} \nabla(u+v)\cdot\nabla w\nonumber\\
& - a_{\min} \int_\Omega (u+v)^{-p}+b_{\max} \int_\Omega (u+v)^{-p+1}+c_{\max} \int_\Omega (u+v)^{-p+1}.
\end{align}

The rest of the proof is divided into three steps.

\smallskip

\noindent {\bf Step 1.} In this step,   we prove that there is $0<p(\chi_1,\chi_2)\le 1$ such that
\begin{align}
\label{lower-bound-proof-eq1}
\frac{1}{p}\frac{d}{dt}\int_\Omega (u+v)^{-p}\le & - \Big(a_{\min}-{ 2\mu}h(\chi_1,\chi_2)\Big) \int_\Omega (u+v)^{-p}\nonumber\\
&+b_{\max} \int_\Omega (u+v)^{-p+1}+c_{\max} \int_\Omega (u+v)^{-p+1},
\end{align}
where $h(\chi_1,\chi_2)$ is as in \eqref{h-function-eq}.

We divide the proof of \eqref{lower-bound-proof-eq1} into three  cases.

\smallskip

\noindent {\bf Case 1.}  {\bf $\frac{(\chi_1-\chi_2)^2}{4\chi_2}\le 1$. }
In this case, let  $p=1$. Then by \eqref{chi-1-2-eq1} and \eqref{lower-bound-proof-eq0}, we have
\begin{align*}
\frac{1}{p} \frac{d}{dt}\int_\Omega (u+v)^{-p}\le &  - (p+1)\int_\Omega (u+v)^{-p-2}|\nabla (u+v)|^2\nonumber\\
&+(p+1)\int_\Omega (u+v)^{-p-2}\frac{\chi_1 u+\chi_2 v}{w} \nabla(u+v)\cdot\nabla w\nonumber\\
& - a_{\min} \int_\Omega (u+v)^{-p}+b_{\max} \int_\Omega (u+v)^{-p+1}+c_{\max} \int_\Omega (u+v)^{-p+1}\nonumber\\
\le & - (p+1)\int_\Omega (u+v)^{-p-2}|\nabla (u+v)|^2\nonumber\\
&+(p+1)\Big( \frac{p(\chi_2-\chi_1)^2}{4\chi_2} \int_\Omega (u+v)^{-p-2} |\nabla (u+v)|^2 + \frac{\mu \chi_1}{p} \int_{\Omega} (u+v)^{-p} \Big)\nonumber\\
& - a_{\min} \int_\Omega (u+v)^{-p}+b_{\max} \int_\Omega (u+v)^{-p+1}+c_{\max} \int_\Omega (u+v)^{-p+1}\nonumber\\
= & -\Big(a_{\min} -2\mu \chi_1\Big)\int_\Omega (u+v)^{-p}+b_{\max} \int_\Omega (u+v)^{-p+1}+c_{\max} \int_\Omega (u+v)^{-p+1}.
\end{align*}
Therefore, \eqref{lower-bound-proof-eq1} holds with $p=1$.

\smallskip

\noindent {\bf Case 2.}  {\bf $\frac{(\chi_1-\chi_2)^2}{4\chi_1}\le 1$. } In this case,  let $p=1$. By the similar arguments as in {\bf Case 1}, we have that
\begin{align*}
\frac{1}{p} \frac{d}{dt}\int_\Omega (u+v)^{-p} \le &  -\Big(a_{\min} -2\mu \chi_2\Big)\int_\Omega (u+v)^{-p}\nonumber\\
&+b_{\max} \int_\Omega (u+v)^{-p+1}+c_{\max} \int_\Omega (u+v)^{-p+1}.
\end{align*}
Therefore, \eqref{lower-bound-proof-eq1} holds with $p=1$.

\smallskip

\noindent {\bf Case 3.} {\bf $\frac{(\chi_1-\chi_2)^2}{4\chi_2}> 1$} and  {\bf $\frac{(\chi_1-\chi_2)^2}{4\chi_1}> 1$. } In this case, let
$$
p=\frac{4\chi_2}{(\chi_1-\chi_2)^2}.
$$
  Then by \eqref{chi-1-2-eq1} and \eqref{lower-bound-proof-eq0}, we have
\begin{align*}
&\frac{1}{p} \frac{d}{dt}\int_\Omega (u+v)^{-p}\nonumber\\
 \le&   - (p+1)\int_\Omega (u+v)^{-p-2}|\nabla (u+v)|^2\nonumber\\
&+(p+1)\Big( \frac{p(\chi_2-\chi_1)^2}{4\chi_2} \int_\Omega (u+v)^{-p-2} |\nabla (u+v)|^2 + \frac{\mu \chi_2}{p} \int_{\Omega} (u+v)^{-p} \Big)\nonumber\\
& - a_{\min} \int_\Omega (u+v)^{-p}+b_{\max} \int_\Omega (u+v)^{-p+1}+c_{\max} \int_\Omega (u+v)^{-p+1}\nonumber\\
\le  & -\Big(a_{\min} -\frac{\mu(\chi_1-\chi_2)^2}{2}\Big)\int_\Omega (u+v)^{-p}+b_{\max} \int_\Omega (u+v)^{-p+1}+c_{\max} \int_\Omega (u+v)^{-p+1}.
\end{align*}
Therefore, \eqref{lower-bound-proof-eq1} holds with $p=\frac{4\chi_2}{(\chi_1-\chi_2)^2}$.
The proof of \eqref{lower-bound-proof-eq1} is thus completed.

\smallskip

\smallskip
\noindent {\bf Step 2.} In this step, we prove that
\begin{equation}
\label{lower-bound-proof-eq2}
\limsup_{t\to\infty} \int_\Omega (u+v)^{-p}\le  \frac{m_0 |\Omega|}{a_{\min}-{ 2}h(\chi_1,\chi_2)},
\end{equation}
where $h(\chi_1,\chi_2)$ is as in \eqref{h-function-eq},  $m_0$ is as in \eqref{m-0-eq},
and  $p$ is as in {\bf Step 1.}

First,  by H\"older's  inequality, we have
\begin{align*}
    b_{\max} \int_\Omega (u+v)^{-p+1} \leq&  b_{\max} |\Omega|^p \cdot \left(\int_{\Omega} u+v\right)^{1-p},\\
    c_{\max} \int_\Omega (u+v)^{-p+1} \leq& c_{\max} |\Omega|^p \cdot \left(\int_{\Omega} u+v\right)^{1-p}.
\end{align*}
Then by Lemma \ref{lower-bound-lm1}, we have
\begin{align*}
    b_{\max}\limsup_{t\to\infty}  \int_\Omega (u+v)^{-p+1} \leq&  b_{\max} |\Omega| \cdot \left(\frac{a_1}{b_1}+\frac{a_2}{b_2}\right)^{1-p}\le   b_{\max} |\Omega| \cdot \max\Big\{1, \frac{a_1}{b_1}+\frac{a_2}{b_2}\Big\},\\
    c_{\max} \limsup_{t\to\infty} \int_\Omega (u+v)^{-p+1} \leq& c_{\max} |\Omega|  \cdot \left(\frac{a_1}{b_1}+\frac{a_2}{b_2}\right)^{1-p}\le  c_{\max} |\Omega|  \cdot \max\{1,\frac{a_1}{b_1}+\frac{a_2}{b_2}\Big\}.
\end{align*}
This together with \eqref{lower-bound-proof-eq1} implies \eqref{lower-bound-proof-eq2}.

\smallskip

\noindent {\bf Step 3.}  In this step, we prove \eqref{new-lower-bound-eq1} and
\eqref{lower-bound-proof-eq3}.

By Lemma  \ref{lower-bound-lm3}, we have
$$
\int_\Omega (u(t,x)+v(t,x))dx \ge |\Omega|^{\frac{p+1}{p}}\Big(\int_\Omega (u(t,x)+v(t,x))^{-p}dx\Big)^{-\frac{1}{p}}
$$
This together with \eqref{lower-bound-proof-eq2} implies \eqref{new-lower-bound-eq1}, that is, 
$$
\liminf_{t\to\infty}\int_\Omega (u+v)\ge  \frac{|\Omega|\Big (a_{\min}-{ 2}h(\chi_1,\chi_2)\Big)^{1/p}}{m_0^{1/p}},
$$
which together with Lemma \ref{lower-bound-lm2} implies \eqref{lower-bound-proof-eq3}.
Theorem \ref{main-thm1}(1) is thus proved.
\end{proof}

\begin{proof}[Proof of Theorem \ref{main-thm1}(2)] Assume $\chi_1=\chi_2$. Let $\chi=\chi_1$. By \eqref{chi-1-2-eq3} with $p=1$,   for  $\beta>0$, we have
    \begin{align*}
  &  \int_\Omega (u+v)^{-3}\frac{\chi_1 u+\chi_2 v}{w} \nabla(u+v)\cdot\nabla w \nonumber\\
 &\le   \frac{(\chi-\beta)^2}{4\beta} \int_\Omega (u+v)^{-3} |\nabla (u+v)|^2 +
     {\beta\mu}\int_{\Omega} (u+v)^{-1}\quad \forall\, t>0.
\end{align*}
By \eqref{lower-bound-proof-eq0} with $p=1$, we have
\begin{align*}
\frac{d}{dt}\int_\Omega (u+v)^{-1}
\le& - 2\int_\Omega (u+v)^{-p-2}|\nabla (u+v)|^2\nonumber\\
&+2      \frac{(\chi-\beta)^2}{4\beta} \int_\Omega (u+v)^{-3} |\nabla (u+v)|^2 +
   2  {\beta\mu}\int_{\Omega} (u+v)^{-1}  \nonumber\\
& - a_{\min} \int_\Omega (u+v)^{-p}+b_{\max} \int_\Omega (u+v)^{-p+1}+c_{\max} \int_\Omega (u+v)^{-p+1}.
\end{align*}
Let $\beta$ be such that
$$
(\chi-\beta)^2=4\beta.
$$
Then
$$
\beta=\chi+2-2\sqrt{\chi+1},
$$
and
$$
2\beta =2\big(\sqrt {\chi+1}-1\big)^2.
$$
 We then have
\begin{align*}
\frac{d}{dt}\int_\Omega (u+v)^{-1}
\le  -\big(a_{\min}-2\mu {  \big(\sqrt{\chi+1}-1\big)^2}\big) \int_{\Omega} (u+v)^{-1}+(b_{\max}+c_{\max})|\Omega|.
\end{align*}
This together with the comparison principle for scalar ODEs  implies that
$$
\limsup_{t\to\infty} \int_\Omega (u+v)^{-1}\le \frac{(b_{\max}+c_{\max})|\Omega|}
{a_{\min}-2\mu { \big(\sqrt{\chi+1}-1\big)^2}}.
$$
Then by Lemma  \ref{lower-bound-lm3}, we have
$$
\liminf_{t\to\infty} \int_\Omega (u+v)\ge |\Omega|\frac{a_{\min}-2\mu{ (\sqrt{\chi+1}-1)^2}}{b_{\max}+c_{\max}}.
$$
Hence \eqref{new-lower-bound-eq2} holds.
By Lemma \ref{lower-bound-lm2}, we obtain
$$
\liminf_{t\to\infty}\inf_{x\in\Omega} w(t,x)\ge \delta_0\cdot\min\{\nu,\lambda\}\cdot  |\Omega| \cdot \frac{a_{\min}-2\mu{ (\sqrt{\chi+1}-1)^2}}{b_{\max}+c_{\max}}.
$$
Theorem \ref{main-thm1}(2) is thus proved.
\end{proof}

\section{Stabilization and proof of Theorem \ref{main-thm2}}

In this section, we assume   \eqref{weak-competition-eq1}. We  study the stabilization in \eqref{main-eq}, i.e., the global stability of
the positive constant solution$(u^*,v^*,w^*)$ of \eqref{main-eq}, and prove
Theorem \ref{main-thm2}. { Throughout this section, we assume that $(u_0,v_0)$ satisfies \eqref{new-initial-cond-eq}.}

Here is  the main idea of the proof of Theorem \ref{main-thm2}.
We prove Theorem \ref{main-thm2} by first showing that
\begin{equation}
\label{stability-eq0-0}
\|u(t,\cdot;u_0,v_0)-u^*\|_{L^2}+\|v(t,\cdot;u_0,v_0)-v^*\|_{L^2}\to 0
\end{equation}
as $t\to\infty$, and then showing that
 \begin{equation}
\label{stability-eq0-1}
\|u(t,\cdot;u_0,v_0)-u^*\|_{L^\infty}+\|v(t,\cdot;u_0,v_0)-v^*\|_{L^\infty}\to 0
\end{equation}
as $t\to\infty$.
We prove \eqref{stability-eq0-0} by showing that there is $\epsilon>0$ such that
\begin{equation*}
\epsilon \int_\tau^ t\int_\Omega (u(s,x;u_0,v_0)-u^*)^2+(v(s,x;u_0,v_0)-v^*)^2\le {E(\tau)} \quad \forall\,\, t>\tau\gg 1,
\end{equation*}
 where $E(\tau)=E(\tau;u_0,v_0)$ is as in \eqref{energy-function-eq}.

Before proving Theorem \ref{main-thm2}, we prove some lemmas.
As in section 3,  for any given $u_0,v_0$ satisfying {\eqref{new-initial-cond-eq}}, if no confusion occurs, we may put
$$
(u(t,x),v(t,x),w(t,x)):=(u(t,x;u_0,v_0),v(t,x;u_0,v_0),w(t,x;u_0,v_0)),
$$
and  we may also drop $(t,x)$ in $u(t,x)$ (resp. $v(t,x)$, $w(t,x)$).
Throughout this section, we assume that \eqref{cond-on-chi-eq1} holds.

\begin{lemma}
\label{stability-lm0-1}
Suppose that $f(t): (\tau, \infty) \rightarrow \mathbb{R}$ is a uniformly continuous nonnegative function such that $\int_{\tau}^{\infty} f(t)dt < \infty$. Then $f(t) \rightarrow 0$ as $t \rightarrow \infty.$

\end{lemma}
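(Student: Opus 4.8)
The plan is to argue by contradiction, following the standard Barbalat-type argument. Suppose that $f(t)$ does not converge to $0$ as $t\to\infty$. Then there exist a number $\epsilon_0>0$ and an increasing sequence $t_n\to\infty$ such that $f(t_n)\ge \epsilon_0$ for every $n$. The idea is to use uniform continuity to upgrade this pointwise largeness of $f$ at the points $t_n$ into largeness of $f$ on whole intervals of fixed length, and then to contradict the finiteness of the integral.

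Concretely, I would first invoke uniform continuity: there is $\delta>0$ such that $|f(s)-f(t)|<\epsilon_0/2$ whenever $|s-t|\le \delta$. Consequently, for each $n$ and each $s\in[t_n,t_n+\delta]$ we have
\[
f(s)\ge f(t_n)-\frac{\epsilon_0}{2}\ge \frac{\epsilon_0}{2}.
\]
After passing to a subsequence if necessary, I may assume the intervals $[t_n,t_n+\delta]$ are pairwise disjoint; this is achieved by selecting the $t_n$ inductively so that $t_{n+1}>t_n+\delta$, which is possible because $t_n\to\infty$.

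Finally, since $f\ge 0$, I would bound the integral from below by summing only the contributions over these disjoint intervals, discarding the rest:
\[
\int_\tau^\infty f(s)\,ds \ge \sum_{n} \int_{t_n}^{t_n+\delta} f(s)\,ds \ge \sum_{n} \frac{\epsilon_0\,\delta}{2}=\infty,
\]
which contradicts the hypothesis $\int_\tau^\infty f(s)\,ds<\infty$. Hence $f(t)\to 0$ as $t\to\infty$. I do not expect any genuine obstacle in this proof, as it is a classical fact; the only point requiring a little care is the bookkeeping that lets me choose the intervals $[t_n,t_n+\delta]$ to be disjoint (handled by the inductive subsequence selection) together with the essential use of nonnegativity of $f$ to throw away its integral outside these intervals.
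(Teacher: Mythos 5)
Your argument is correct and is the classical Barbalat-type proof; the paper itself gives no details, deferring to the arguments of \cite[Lemma 3.1]{ba-wi}, which is precisely this contradiction argument via uniform continuity and disjoint intervals. No gap to report.
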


\begin{proof}
It follows from the arguments of  \cite[Lemma 3.1]{ba-wi}.
\end{proof}

\begin{lemma}
\label{stability-lm0-2}
Let
$$
f(t)=\int_\Omega \Big[(u(t,x;u_0,v_0)-u^*)^2+(v(t,x;u_0,v_0)-v^*)^2\Big]dx.
$$
Then $f(t)$ is uniformly continuous in $t\ge \tau>0$ for any $\tau>0$.
\end{lemma}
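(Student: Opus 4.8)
The plan is to prove the stronger statement that $u$ and $v$ are uniformly Hölder continuous in time, uniformly in $x$, on $[\tau,\infty)\times\bar\Omega$; since they are also uniformly bounded there, the uniform continuity of $f$ will follow at once from the algebraic identity
\begin{align*}
f(t)-f(s)=&\int_\Omega \big(u(t,x)-u(s,x)\big)\big(u(t,x)+u(s,x)-2u^*\big)\,dx\\
&+\int_\Omega \big(v(t,x)-v(s,x)\big)\big(v(t,x)+v(s,x)-2v^*\big)\,dx.
\end{align*}

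First I would collect the a priori bounds. By \eqref{new-initial-cond-eq} we have $T_{\max}(u_0,v_0)=\infty$ and $\limsup_{t\to\infty}\|(u+v)(t,\cdot)\|_\infty<\infty$, so together with the boundedness of the solution on compact time intervals there is a constant $M>0$ with $0\le u(t,x),v(t,x)\le M$ for all $t\ge\tau$, $x\in\bar\Omega$. Applying elliptic regularity to the third equation $0=\Delta w-\mu w+\nu u+\lambda v$ with these bounds gives a uniform $W^{2,q}(\Omega)$, hence $C^1(\bar\Omega)$, bound on $w(t,\cdot)$, so that $\|\nabla w(t,\cdot)\|_\infty$ is bounded uniformly in $t\ge\tau$. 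Moreover, since \eqref{cond-on-chi-eq1} is assumed throughout this section, Theorem \ref{main-thm1}(1) together with the strict positivity of $w$ on compact time intervals (Lemma \ref{lower-bound-lm2}) yields a constant $w_{\min}>0$ with $w(t,x)\ge w_{\min}$ for all $t\ge\tau$, $x\in\bar\Omega$. Consequently the coefficients $\frac{\nabla w}{w}$ and $\frac{|\nabla w|^2}{w^2}$ appearing below are bounded uniformly in $t\ge\tau$.

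Next I would rewrite the $u$-equation in non-divergence form, using $\Delta w=\mu w-\nu u-\lambda v$, as
\begin{equation*}
u_t=\Delta u-\chi_1\frac{\nabla w}{w}\cdot\nabla u+u\Big(a_1-b_1u-c_1v-\chi_1\mu+\chi_1\frac{\nu u+\lambda v}{w}+\chi_1\frac{|\nabla w|^2}{w^2}\Big),
\end{equation*}
and view it, together with the analogous $v$-equation, as a uniformly parabolic linear equation whose first- and zeroth-order coefficients and whose inhomogeneity are all bounded uniformly on $[\tau,\infty)\times\bar\Omega$. Applying standard parabolic $L^q$ estimates (or parabolic Schauder estimates) on the translated unit time windows $[t,t+1]$, and using that the bounds from the previous step are independent of $t$, I would obtain a single exponent $\theta\in(0,1)$ and a constant $L>0$, independent of $t$, such that $u,v\in C^{\theta/2,\theta}([\tau,\infty)\times\bar\Omega)$ with $|u(t,x)-u(s,x)|\le L|t-s|^{\theta/2}$ and $|v(t,x)-v(s,x)|\le L|t-s|^{\theta/2}$ for all $t,s\ge\tau$ and $x\in\bar\Omega$. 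Inserting this and the bound $|u(t,x)+u(s,x)-2u^*|,|v(t,x)+v(s,x)-2v^*|\le 2M+2\max\{u^*,v^*\}$ into the identity above gives $|f(t)-f(s)|\le C|t-s|^{\theta/2}$, so $f$ is Hölder, and hence uniformly, continuous.

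The main obstacle I expect is the third step: securing the parabolic Hölder regularity \emph{uniformly on the unbounded interval} $[\tau,\infty)$, i.e.\ with constants that do not degenerate as $t\to\infty$. This is precisely why the time-independent bounds on $u$, $v$, on $\nabla w$, and above all the positive lower bound $w\ge w_{\min}$ supplied by Theorem \ref{main-thm1} are needed: applying the parabolic estimates on the translated windows $[t,t+1]$ converts these uniform bounds into a uniform modulus of continuity. Once the lower bound on $w$ is in hand the coefficients of the $u$- and $v$-equations are genuinely bounded (the singularity at $w=0$ being the only real threat), and the remaining argument is routine.
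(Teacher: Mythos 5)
Your proposal is correct and follows essentially the same route as the paper: both arguments hinge on the uniform $L^\infty$ bounds from \eqref{new-initial-cond-eq}, the positive lower bound for $w$ on $[\tau,\infty)$ supplied by Theorem \ref{main-thm1}, elliptic regularity for $w$, and rewriting the $u$- and $v$-equations in non-divergence form with uniformly bounded coefficients so that parabolic a priori estimates apply uniformly in time. The only (immaterial) difference is the final step — the paper uses the resulting $\sup_{t\ge\tau}\|u(t,\cdot)\|_{C^1(\bar\Omega)}<\infty$ to bound $f'(t)$, whereas you use uniform temporal H\"older continuity of $u,v$ to bound $f(t)-f(s)$ directly.
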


\begin{proof}
It suffices to prove that $f^{'}(t)$ is bounded on $[\tau,\infty)$ for any $\tau>0$.
By a direct calculation, we have
\begin{align*}
f^{'}(t)&=2\int_\Omega(u-u^*) u_t+2\int_\Omega (v-v^*) v_t\nonumber\\
&=2\int_\Omega (u-u^*) \Big(\Delta u-\chi_1\nabla \big(\frac{u}{w}\nabla w\big)+u(a_1-b_1u-c_1 v)\Big)\nonumber\\
&\quad +2\int_\Omega (v-v^*)\Big(\Delta v-\chi_2 \nabla \cdot\big(\frac{v}{w}\nabla v)+ v(a_2-b_2v-c_2u)\Big)\nonumber\\
&=-2\int_\Omega |\nabla u|^2+{ 2 \chi_1} \int_\Omega \frac{u}{w}\nabla u\cdot\nabla w+
{ 2 \int_\Omega (u-u^*) u}(a_1-b_1u-c_2v)\nonumber\\
&\quad -2\int_\Omega |\nabla v|^2+{ 2 \chi_2}\int_\Omega \frac{v}{w}\nabla v\cdot\nabla w+{2 \int_\Omega (v-v^*) v}(a_2-b_2v-c_2u).
\end{align*}
{Note that
$$
\begin{cases}
\Delta w-\mu w+\nu u+\lambda v=0,\quad &x\in\Omega\cr
\frac{\p w}{\p n}=0,\quad &x\in\p\Omega.
\end{cases}
$$
{By \eqref{new-initial-cond-eq}, for any $q>2N$,
$$
\limsup_{t\to\infty}\|u(t,\cdot;u_0,v_0)+v(t,\cdot;u_0,v_0)\|_{L^q}<\infty.
$$
By Theorem \ref{main-thm1},
\begin{equation}
\label{new-eq2}
\inf_{t\in [\tau,\infty),x\in\Omega} w(t,x;u_0,v_0)>0\quad \forall\,\tau>0.
\end{equation}
Then by Proposition \ref{upper-bound-prop},}  and a priori estimates for elliptic equations, we have
\begin{equation}
\label{new-eq1}
\sup_{t\ge \tau} \Big\{\|w(t,\cdot)\|_{C^{\theta}(\bar \Omega)},\|\nabla w(t,\cdot)\|_{C^\theta(\bar\Omega)},\|\Delta w(t,\cdot)\|_{C^\theta(\bar\Omega)}\Big\}<\infty
\end{equation}
 for some $0<\theta<1$ and any $\tau>0$.
Note that
\begin{equation*}
\begin{cases}
u_t=\Delta u+{\bf q(t,x)} \cdot \nabla u+p(t,x)u,\quad &x\in \Omega\cr
\frac{\p u}{\p n}=0,\quad &x\in\p\Omega,
\end{cases}
\end{equation*}
where
$$
{\bf q(t,x)}=-\chi_1 \frac{\nabla w}{w},\quad p(t,x)=-\chi_1 \frac{\Delta w}{w} +\chi_1 \frac{|\nabla w|^2}{w^2}+(a_1-b_1u-c_2 v).
$$
It then follows from  \eqref{new-eq2},   \eqref{new-eq1},   Proposition \ref{upper-bound-prop}, and a priori estimates for parabolic equations  that
\begin{equation}
\label{new-eq3}
\sup_{t\ge \tau}\|u(t,\cdot)\|_{C^1(\bar \Omega)}<\infty\quad \forall\,\tau>0.
\end{equation}
Similarly, we have
\begin{equation}
\label{new-eq4}
\sup_{t\ge \tau}\|v(t,\cdot)\|_{C^1(\bar \Omega)}<\infty\quad \forall\,\tau>0.
\end{equation}
By \eqref{new-eq1}-\eqref{new-eq4},}
  $f^{'}(t)$ is bounded on $[\tau,\infty)$ for any $\tau>0$.
The lemma is thus proved.
\end{proof}

\begin{lemma}
\label{stability-lm1}
Assume \eqref{weak-competition-eq1}. Let $(u^*,v^*,w^*)$ be as in \eqref{positive-constant-solu-eq}.
Then
$$
\int_\Omega(w-w^*)^2\le \frac{2\nu^2}{\mu^2}\int_\Omega(u-u^*)^2+\frac{2\lambda^2}{\mu^2}\int_\Omega (v-v^*)^2.
$$
\end{lemma}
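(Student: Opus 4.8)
The plan is to exploit the elliptic equation for $w$ directly as a testing-function (energy) estimate, with the weak competition hypothesis entering only to guarantee the existence and explicit form of $(u^*,v^*,w^*)$. First I would set $W := w - w^*$ and observe that, since $w^*$ is a constant, $\Delta w^* = 0$, while by \eqref{positive-constant-solu-eq} we have $\mu w^* = \nu u^* + \lambda v^*$. Subtracting this identity from the third equation of \eqref{main-eq} shows that $W$ solves the linear elliptic problem
$$
\Delta W - \mu W = -\nu(u - u^*) - \lambda(v - v^*) \quad \text{in } \Omega, \qquad \frac{\partial W}{\partial n} = 0 \quad \text{on } \partial\Omega.
$$

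Next I would test this equation against $W$ itself, i.e. multiply by $W$ and integrate over $\Omega$. Integration by parts together with the Neumann condition gives $\int_\Omega W\,\Delta W = -\int_\Omega |\nabla W|^2$, so that
$$
\int_\Omega |\nabla W|^2 + \mu \int_\Omega W^2 = \nu \int_\Omega (u - u^*)\,W + \lambda \int_\Omega (v - v^*)\,W.
$$
Discarding the nonnegative Dirichlet energy $\int_\Omega |\nabla W|^2$ leaves $\mu \int_\Omega W^2 \le \nu \int_\Omega (u-u^*)W + \lambda \int_\Omega (v-v^*)W$.

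Then I would estimate the two cross terms by Young's inequality, choosing the weights so that the induced multiple of $\int_\Omega W^2$ can be absorbed on the left. Concretely, pointwise one has
$$
\nu (u-u^*)W \le \frac{\nu^2}{\mu}(u-u^*)^2 + \frac{\mu}{4}W^2, \qquad \lambda (v-v^*)W \le \frac{\lambda^2}{\mu}(v-v^*)^2 + \frac{\mu}{4}W^2.
$$
Integrating these and inserting them above produces $\mu \int_\Omega W^2 \le \frac{\nu^2}{\mu}\int_\Omega (u-u^*)^2 + \frac{\lambda^2}{\mu}\int_\Omega (v-v^*)^2 + \frac{\mu}{2}\int_\Omega W^2$; subtracting $\frac{\mu}{2}\int_\Omega W^2$ and multiplying through by $2/\mu$ yields exactly the claimed bound.

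There is no serious obstacle here: the argument is a standard energy estimate for the linear elliptic equation relating $W$ to $u-u^*$ and $v-v^*$. The only point requiring a little care is the bookkeeping in Young's inequality, where the constants must be chosen so that precisely $\frac{\mu}{2}\int_\Omega W^2$ is generated and absorbed, which is what produces the factor $2/\mu^2$ in the final estimate. I note that the weak competition condition \eqref{weak-competition-eq1} is used only through \eqref{positive-constant-solu-eq}; the inequality itself holds for any constant triple satisfying $\mu w^* = \nu u^* + \lambda v^*$.
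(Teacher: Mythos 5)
Your proof is correct and is essentially identical to the paper's: both subtract the identity $\mu w^*=\nu u^*+\lambda v^*$ from the elliptic equation, test with $w-w^*$, drop the gradient term, and apply Young's inequality with weight $\mu/4$ on each cross term to absorb $\tfrac{\mu}{2}\int_\Omega(w-w^*)^2$. Your closing remark that only $\mu w^*=\nu u^*+\lambda v^*$ is needed (not the full weak-competition hypothesis) is also accurate.
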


\begin{proof}
First, note that
$$
-\Delta(w-w^*)+\mu(w-w^*)=\nu(u-u^*)+\lambda (v-v^*).
$$
Multiplying the above equation by $(w-w^*)$ and integrating over $\Omega$, we get
\begin{align*}
&\int_\Omega |\nabla (w-w^*)|^2+\mu\int_\Omega (w-w^*)^2\nonumber\\
&=\nu\int_\Omega(w-w^*)(u-u^*)+\lambda\int_\Omega (w-w^*)(v-v^*)\nonumber\\
&\le \frac{\mu}{4}\int_\Omega (w-w^*)^2+\frac{\nu^2}{\mu}\int_\Omega(u-u^*)^2+
\frac{\mu}{4}\int_\Omega (w-w^*)^2+\frac{\lambda^2}{\mu}\int_\Omega(v-v^*)^2.
\end{align*}
It then follows that
$$
\int_\Omega(w-w^*)^2\le \frac{2\nu^2}{\mu^2}\int_\Omega(u-u^*)^2+\frac{2\lambda^2}{\mu^2}\int_\Omega (v-v^*)^2.
$$
The lemma is thus proved.
\end{proof}

\begin{lemma}
\label{stability-lm2}
Assume \eqref{weak-competition-eq1}. Let $(u^*,v^*,w^*)$ be as in \eqref{positive-constant-solu-eq}. Then
$$
\int_\Omega \frac{|\nabla w|^2}{w^2}\le \frac{m^*(\mu,\nu,\lambda)}{w^*} \frac{1}{\inf_{x\in\Omega} w(t,x)} \Big[\int_\Omega (u-u^*)^2
+\int_\Omega (v-v^*)^2\Big],
$$
where  $m^*(\mu,\nu,\lambda)$ is as in \eqref{m-star-eq}.
\end{lemma}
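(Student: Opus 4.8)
The plan is to extract the desired bound directly from the elliptic equation for $w$ by testing it against a cleverly weighted function. Write $W:=w-w^*$, so that $\nabla w=\nabla W$, and subtract the equilibrium identity $\mu w^*=\nu u^*+\lambda v^*$ (which follows from \eqref{positive-constant-solu-eq}) from the third equation of \eqref{main-eq} to obtain
\[
\Delta W-\mu W+\nu(u-u^*)+\lambda(v-v^*)=0,\qquad \tfrac{\p W}{\p n}=0 \ \text{ on } \ \p\Omega.
\]
The key decision is which multiplier to use: testing with $\frac1w$ collapses to the tautology $\int_\Omega\frac{|\nabla W|^2}{w^2}=\int_\Omega\frac{\Delta W}{w}$ and extracts no smallness, while testing with $W$ gives only the crude weight $\tfrac{1}{(\inf_\Omega w)^2}$ rather than the sharper $\tfrac{1}{w^*\inf_\Omega w}$ that the statement requires. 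Instead I will multiply the identity by $\frac{W}{w}$ and integrate over $\Omega$.

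The crucial observation — and the step I expect to be the main obstacle to spot — is that this weight simplifies the second-order term dramatically, because $w-W=w^*$ is constant:
\[
\nabla\Big(\frac{W}{w}\Big)=\frac{w\nabla W-W\nabla w}{w^2}=\frac{(w-W)\nabla W}{w^2}=\frac{w^*}{w^2}\nabla W.
\]
Integrating by parts (the boundary term vanishes since $\frac{\p W}{\p n}=0$) then gives $\int_\Omega\frac{W}{w}\Delta W=-w^*\int_\Omega\frac{|\nabla W|^2}{w^2}$, so the tested identity reduces to
\[
w^*\int_\Omega\frac{|\nabla W|^2}{w^2}+\mu\int_\Omega\frac{W^2}{w}=\nu\int_\Omega\frac{W(u-u^*)}{w}+\lambda\int_\Omega\frac{W(v-v^*)}{w}.
\]
This already produces the factor $w^*$ and a nonnegative dissipative term $\mu\int_\Omega\frac{W^2}{w}$, which I will simply discard.

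From here the argument is routine bookkeeping aimed at matching the constant $m^*$. Applying Young's inequality pointwise, $\nu W(u-u^*)\le\frac{\nu}{2}W^2+\frac{\nu}{2}(u-u^*)^2$ and likewise for the $\lambda$-term, I bound the right-hand side by $\int_\Omega\frac1w\big[\big(-\mu+\frac{\nu}{2}+\frac{\lambda}{2}\big)W^2+\frac{\nu}{2}(u-u^*)^2+\frac{\lambda}{2}(v-v^*)^2\big]$; note that the coefficient $-\mu+\frac{\nu}{2}+\frac{\lambda}{2}$ is exactly the quantity inside the maxima in \eqref{m-star-eq}. Using $\frac1w\le\frac{1}{\inf_\Omega w}$ and absorbing the possibly negative $W^2$-coefficient into $\max\{1,-\mu+\frac{\nu}{2}+\frac{\lambda}{2}\}$, I then invoke Lemma \ref{stability-lm1} to replace $\int_\Omega W^2=\int_\Omega(w-w^*)^2$ by $\frac{2\nu^2}{\mu^2}\int_\Omega(u-u^*)^2+\frac{2\lambda^2}{\mu^2}\int_\Omega(v-v^*)^2$. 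The resulting coefficients of $\int_\Omega(u-u^*)^2$ and $\int_\Omega(v-v^*)^2$ are precisely the two entries of the maximum defining $m^*(\mu,\nu,\lambda)$, hence each is $\le m^*(\mu,\nu,\lambda)$; dividing by $w^*$ and recalling $\nabla w=\nabla W$ yields the claim. The only genuinely delicate ingredient is the choice of weight $\frac{W}{w}$ together with the cancellation $w-W=w^*$ — every subsequent estimate is engineered to reproduce the precise form of $m^*$.
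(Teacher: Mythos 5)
Your proposal is correct and follows essentially the same route as the paper: the paper likewise tests the $w$-equation against $\frac{w-w^*}{w}$, exploits the identity $\nabla\big(\frac{w-w^*}{w}\big)=\frac{w^*}{w^2}\nabla w$ to produce the factor $w^*$, discards the nonnegative $\mu\int_\Omega\frac{(w-w^*)^2}{w}$ contribution via Young's inequality, and then invokes Lemma \ref{stability-lm1} to arrive at $m^*(\mu,\nu,\lambda)$. Your preliminary subtraction of the equilibrium identity is only a cosmetic reorganization of the same computation.
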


\begin{proof}
First,   note that $\mu w^*=\nu u^*+\lambda v^*$.  Multiplying the third equation in \eqref{main-eq} by $\frac{w(t,x)-w^*}{w(t,x)}$ and then  integrating over $\Omega$ with respect to $x,$ we obtain that
\begin{align*}
    0&= \int_{\Omega} \left(\frac{w(t,x)-w^*}{w(t,x)}\right) \cdot \Big( \Delta w(t,x) -\mu w(t,x) + \nu u(t,x)+\lambda v{ (t,x)}\Big)dx\nonumber\\
&=-\int_\Omega w^* \frac{|\nabla w|^2}{w^2}-\mu \int_\Omega (w-w^*)+\nu
\int_\Omega \frac{w-w^*}{w} u+\lambda \int\frac{w-w^*}{w} v\nonumber\\
&=-\int_\Omega w^* \frac{|\nabla w|^2}{w^2}-\mu \int_\Omega\Big[ \frac{1}{w}(w-w^*)^2+w^*-\frac{(w^*)^2}{w}\Big]+
\int_\Omega \frac{w-w^*}{w}(\nu  u+\lambda v)\nonumber\\
&=-\int_\Omega w^* \frac{|\nabla w|^2}{w^2}-\mu \int_\Omega  \frac{1}{w}(w-w^*)^2-\mu \int_\Omega \frac{w^*}{w}(w-w^*)+
\int_\Omega \frac{w-w^*}{w}(\nu  u+\lambda v)\nonumber\\
&=-\int_\Omega w^* \frac{|\nabla w|^2}{w^2}-\mu \int_\Omega  \frac{1}{w}(w-w^*)^2
+ \int_\Omega \frac{w-w^*}{w}\Big(-\mu w^*+\nu u+\lambda v\Big)
\nonumber\\
&= -\int_\Omega w^* \frac{|\nabla w|^2}{w^2}-\mu \int_\Omega  \frac{1}{w}(w-w^*)^2
+ \int_\Omega \frac{w-w^*}{w}\Big(\nu (u-u^*)+\lambda (v-v^*)\Big).
\end{align*}
This together with Young's inequality  implies that
\begin{align*}
\int_\Omega \frac{|\nabla w|^2}{w^2}&=\frac{1}{w^*} \Big[ \int_\Omega \frac{w-w^*}{w} \big( -\mu (w-w^*)
+ \nu (u-u^*)+\lambda (v-v^*)\big)\Big]\nonumber\\
&\le \frac{1}{w^*} \int _\Omega\frac{1}{w}
\Big[(-\mu+\frac{\nu}{2}+\frac{\lambda}{2})  (w-w^*)^2+\frac{\nu}{2} (u-u^*)^2+
\frac{\lambda}{2}(v-v^*)^2\Big].
\end{align*}
By Lemma \ref{stability-lm1}, we have
\begin{align*}
\int_\Omega \frac{|\nabla w|^2}{w^2}\le & \frac{1}{w^*} \frac{1}{\inf_{x\in\Omega} w(t,x)} \Big(\max\Big\{0, -\mu+\frac{\nu}{2}+\frac{\lambda}{2}\Big\} \frac{2\nu^2}{\mu^2} +\frac{\nu}{2}\Big)\int_\Omega (u-u^*)^2\nonumber\\
&
+  \frac{1}{w^*} \frac{1}{\inf_{x\in\Omega} w(t,x)}  \Big(\max\Big\{1,-\mu+\frac{\nu}{2}+\frac{\lambda}{2}\Big\}\frac{2\lambda ^2}{\mu^2} +\frac{\lambda}{2}\Big)\int_\Omega (v-v^*)^2.
\end{align*}
The lemma then follows.
\end{proof}

\begin{lemma}
\label{stability-lm3}
Assume \eqref{cond-on-chi-eq2}. There is $\epsilon>0$ such that
\begin{equation}
\label{E-eq1}
E^{'}(t)\le -\epsilon \Big(\int_\Omega(u-u^*)^2+\int_\Omega (v-v^*)^2\Big)\quad \forall\,\, t\gg 1,
\end{equation}
where  $E(t)=E(t;u_0,v_0)$ is as in \eqref{energy-function-eq}.
\end{lemma}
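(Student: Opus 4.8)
The plan is to differentiate the relative-entropy energy $E(t)$ along the solution, turn the reaction contribution into a definite quadratic form by means of the equilibrium identities, and absorb the chemotactic cross terms into the diffusive dissipation, so that the only surviving term is a multiple of $\int_\Omega |\nabla w|^2/w^2$; this last quantity is then controlled through Lemma \ref{stability-lm2} and the lower bound on $w$ furnished by Theorem \ref{main-thm1}. First I would differentiate, insert the first two equations of \eqref{main-eq}, and integrate by parts under the Neumann boundary conditions to obtain
\begin{align*}
E'(t)&=-\xi_1 u^*\int_\Omega \frac{|\nabla u|^2}{u^2}-\xi_2 v^*\int_\Omega \frac{|\nabla v|^2}{v^2}+\xi_1\chi_1 u^*\int_\Omega \frac{\nabla u\cdot\nabla w}{u\,w}+\xi_2\chi_2 v^*\int_\Omega \frac{\nabla v\cdot\nabla w}{v\,w}\\
&\quad+\xi_1\int_\Omega (u-u^*)(a_1-b_1u-c_1v)+\xi_2\int_\Omega (v-v^*)(a_2-b_2v-c_2u),
\end{align*}
where the $C^1$ bounds on $u,v,w$ established in the proof of Lemma \ref{stability-lm2}, together with the positive lower bound \eqref{new-eq2} on $w$, guarantee that every integral is finite and that all boundary terms vanish.

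Next I would use the equilibrium relations $a_1=b_1u^*+c_1v^*$ and $a_2=b_2v^*+c_2u^*$ to rewrite $a_1-b_1u-c_1v=-b_1(u-u^*)-c_1(v-v^*)$ and likewise for the second factor, so that the reaction contribution becomes $-\int_\Omega z^\top B(\xi_1,\xi_2,0)\,z$ with $z=(u-u^*,v-v^*)^\top$ and $B$ as defined before \eqref{xi-1-2-eq1}. Since $B(\xi_1,\xi_2,0)=B(\xi_1,\xi_2,\eta)+\eta I$ and $B(\xi_1,\xi_2,\eta)$ is positive definite for every $0<\eta<\eta_0$, this contribution is at most $-\eta\int_\Omega[(u-u^*)^2+(v-v^*)^2]$. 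For the chemotactic terms I would apply Young's inequality in the pointwise form
\[
\xi_1\chi_1 u^*\,\frac{\nabla u\cdot\nabla w}{u\,w}\le \xi_1 u^*\,\frac{|\nabla u|^2}{u^2}+\frac{\xi_1\chi_1^2 u^*}{4}\,\frac{|\nabla w|^2}{w^2},
\]
and the analogous bound for the $v$-term; the first summand of each exactly cancels the corresponding diffusive term, and using $\xi_1,\xi_2\le 1$ one is left with
\[
E'(t)\le-\eta\int_\Omega\big[(u-u^*)^2+(v-v^*)^2\big]+\frac{\chi_1^2 u^*+\chi_2^2 v^*}{4}\int_\Omega \frac{|\nabla w|^2}{w^2}.
\]

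Finally I would invoke Lemma \ref{stability-lm2} to bound $\int_\Omega |\nabla w|^2/w^2$ by $\frac{m^*(\mu,\nu,\lambda)}{w^*\,\inf_{x}w(t,x)}\int_\Omega[(u-u^*)^2+(v-v^*)^2]$, which gives
\[
E'(t)\le-\Big(\eta-\frac{\chi_1^2u^*+\chi_2^2v^*}{4}\cdot\frac{m^*(\mu,\nu,\lambda)}{w^*\,\inf_{x}w(t,x)}\Big)\int_\Omega\big[(u-u^*)^2+(v-v^*)^2\big].
\]
By Theorem \ref{main-thm1}(1), $\liminf_{t\to\infty}\inf_{x\in\Omega}w(t,x)\ge \delta_0\min\{\nu,\lambda\}|\Omega|(a_{\min}-2\mu h(\chi_1,\chi_2))^{1/p}/m_0^{1/p}$, so for $t\gg 1$ the value $\inf_x w(t,x)$ exceeds any prescribed fraction of this bound. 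Substituting it, recalling $m(\mu,\nu,\lambda)=m^*(\mu,\nu,\lambda)/\min\{\nu,\lambda\}$, letting $\eta\uparrow\eta_0$, and rearranging, the required positivity of the bracketed coefficient reduces exactly to $a_{\min}-2\mu h(\chi_1,\chi_2)>m_0\big(\frac{m}{\delta_0|\Omega|w^*\eta_0}\big)^p\big(\frac{\chi_1^2u^*+\chi_2^2v^*}{4}\big)^p$, which is implied by the hypothesis \eqref{cond-on-chi-eq2} because $0<p\le 1$ forces $\big(\frac{m}{\delta_0|\Omega|w^*\eta_0}\big)^p\le\max\{1,\frac{m}{\delta_0|\Omega|w^*\eta_0}\}=m_1$. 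This yields \eqref{E-eq1} for a suitable $\epsilon>0$, and part (2) follows the same scheme, using instead the sharper bound of Theorem \ref{main-thm1}(2) with $p=1$ and the constants $\tilde m_0,\tilde m_1$.

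The delicate point—and the main obstacle—is precisely this last bookkeeping: one must keep careful track of the exponent $p$ and see exactly why the definition of $m_1$ carries the factor $\max\{1,\cdot\}$, and one must exploit the strictness of \eqref{cond-on-chi-eq2} to retain a genuine margin, since the $w$-lower bound is only a $\liminf$ (hence usable only for $t\gg 1$) and $\eta$ can be chosen only strictly below $\eta_0$. The algebraic manipulations (the integration by parts, the Young estimate, and the quadratic-form identity) are routine once the structure is in place; the substance lies in matching the dissipation rate $\eta_0$ against the chemotactic loss controlled by the lower bound on $w$.
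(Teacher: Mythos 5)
Your argument is correct and is essentially the paper's own proof: the same differentiation of $E$, the same use of $a_1=b_1u^*+c_1v^*$, $a_2=b_2v^*+c_2u^*$ to produce the quadratic form associated with $B(\xi_1,\xi_2,\cdot)$, the same Young-inequality cancellation of the diffusive terms, and the same control of $\int_\Omega|\nabla w|^2/w^2$ via Lemma \ref{stability-lm2} combined with the lower bound on $w$ from Theorem \ref{main-thm1}. Your only (cosmetic) deviation is to peel off $-\eta\,|z|^2$ from the reaction quadratic form and compare $\eta$ with the chemotactic loss coefficient, whereas the paper packages that coefficient directly into the matrix $A_\tau=B(\xi_1,\xi_2,\eta)$ and checks its positive definiteness; the final bookkeeping with $0<p\le 1$ and $m_1=\max\{1,\cdot\}$ is exactly as in the paper.
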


\begin{proof}
First of all, recall that
\begin{align*}
E(t)=E(t;u_0,v_0)&=\xi_1\int_\Omega \Big(u(t,x;u_0,v_0)-u^*-u^*\ln\frac{u(t,x;u_0,v_0)}{u^*}\Big)\nonumber\\
&\quad +\xi_2\int_\Omega(v(t,x;u_0,v_0)-v^*-v^*\ln\frac{v(t,x;u_0,v_0)}{v^*}\Big).
\end{align*}
We then have
\begin{align}
\label{stability-eq1}
E^{'}(t)&=\xi_1\int_\Omega \Big(1-\frac{u^*}{u}\Big)u_t+\xi_2\int_\Omega\Big(1-\frac{v^*}{v}\Big) v_t\nonumber\\
&=\xi_1\int_\Omega \frac{u-u^*}{u}\Big( \Delta u-\chi_1\nabla\cdot \big(\frac{u}{w}\nabla w\big)\Big)+{ \xi_1}\int_\Omega (u-u^*)(a_1-b_1 u-c_1 v)\nonumber\\
&\quad
+\xi_2\int_\Omega\frac{v-v^*}{v}\Big(\Delta v-\chi_2\nabla \cdot\big(\frac{v}{w}\nabla w\big)\Big)+\xi_2\int_\Omega (v-v^*)(a_2-b_2v-c_2 u).
\end{align}
Note that
$$
a_1=b_1u^*+c_1v^*\quad {\rm and}\quad a_2=b_2v^*+c_2 u^*.
$$
This together with \eqref{stability-eq1} implies that
\begin{align}
\label{stability-eq2}
E^{'}(t)&=\xi_1 \int_\Omega \frac{u-u^*}{u}\Big( \Delta u-\chi_1\nabla\cdot \big(\frac{u}{w}\nabla w\big)\Big)-\xi_1 b_1 \int_\Omega (u-u^*)^2-\xi_1 c_1 \int_\Omega (u-u^*)(v-v^*)\nonumber\\
&\quad
+\xi_2 \int_\Omega\frac{v-v^*}{v}\Big(\Delta v-\chi_2\nabla \cdot\big(\frac{v}{w}\nabla w\big)\Big)-\xi_2 b_2\int_\Omega (v-v^*)^2-\xi_2 c_2\int_\Omega (u-u^*)(v-v^*)\nonumber\\
&=-\xi_1u^* \int_\Omega u^{-2}|\nabla u|^2-\xi_2 v^*\int_\Omega v^{-2}|\nabla v|^2+\xi_1 \chi_1 u^*\int_\Omega \frac{1}{uw} \nabla u\cdot\nabla w +\xi_2 \chi_2 v^*\int_\Omega\frac{1}{vw}\nabla v\cdot\nabla w\nonumber\\
&\quad -\xi_1 b_1\int_\Omega (u-u^*)^2-\xi_2b_2\int_\Omega (v-v^*)^2-(\xi_1 c_1+\xi_2 c_2)\int_\Omega
(u-u^*)(v-v^*).
\end{align}

Next, note that
\begin{equation}
\label{stability-eq3}
\xi_1 \chi_1 u^*\int_\Omega\frac{1}{uw}\nabla u\cdot\nabla w\le \xi_1 u^*\int_\Omega u^{-2}|\nabla u|^2+\frac{\xi_1 \chi_1^2 u^*}{4}\int_\Omega\frac{|\nabla w|^2}{w^2}
\end{equation}
and
\begin{equation}
\label{stability-eq4}
\xi_2 \chi_2 v^*\int_\Omega\frac{1}{vw}\nabla v\cdot\nabla w\le \xi_2 v^*\int_\Omega v^{-2}|\nabla v|^2+\frac{\xi_2 \chi_2^2 v^*}{4}\int_\Omega\frac{|\nabla w|^2}{w^2}.
\end{equation}
{By \eqref{xi-1-2-eq1},} we have
$0<\xi_i\le 1$ for $i=1,2$. Then
by \eqref{stability-eq2}, \eqref{stability-eq3}, \eqref{stability-eq4},  and  Lemma \ref{stability-lm2}
we obtain
\begin{align}
\label{stability-eq5}
E^{'}(t)&\le \Big(\frac{\xi_1 \chi_1^2 u^*}{4}+\frac{\xi_2 \chi_2^2 v^*}{4}\Big)\int_\Omega \frac{|\nabla w|^2}{w^2}\nonumber\\
&\quad -\xi_1b_1\int_\Omega (u-u^*)^2-\xi_2b_2\int_\Omega (v-v^*)^2-(\xi_1c_1+\xi_2 c_2)\int_\Omega (u-u^*)(v-v^*)\nonumber\\
 &\le \Big(\frac{\chi_1^2 u^*}{4}+\frac{\chi_2^2 v^*}{4}\Big)  \frac{m^*(\mu,\nu,\lambda)}{w^*} \frac{1}{\inf_{t\ge \tau, x\in\Omega} w(t,x)} \Big[\int_\Omega (u-u^*)^2
+\int_\Omega (v-v^*)^2\Big]\nonumber\\
&\quad -\xi_1b_1\int_\Omega (u-u^*)^2-\xi_2b_2\int_\Omega (v-v^*)^2-(\xi_1c_1+\xi_2 c_2)\int_\Omega (u-u^*)(v-v^*)\nonumber\\
&=-\int_\Omega (u-u^*,v-v^*) A_\tau (u-u^*,v-v^*)^\top
\end{align}
for any $t\ge\tau>0$,
where $A_\tau$ is the following $2\times 2 $ symmetric matrix,
$$
A_\tau =\left(\begin{matrix}
\xi_1b_1-   \Big(\frac{\chi_1^2 u^*}{4}+\frac{\chi_2^2 v^*}{4}\Big)  \frac{m^*(\mu,\nu,\lambda)}{w^* \inf_{t\ge \tau,x\in\Omega} w(t,x)} & \frac{1}{2}\big(\xi_1 c_1+\xi_2 c_2\big)   \\
&\\
  \frac{1}{2}\big(\xi_1 c_1+\xi_2 c_2\big)   & \xi_2 b_2- \Big(\frac{\chi_1^2 u^*}{4}+\frac{\chi_2^2 v^*}{4}\Big)  \frac{m^*(\mu,\nu,\lambda)}{w^* \inf_{t\ge\tau,x\in\Omega} w(t,x) }
\end{matrix}\right).
$$
{Note that
$$
A_{\tau}=B(\xi_1,\xi_2,\eta)
$$
with
$$
\eta=\Big(\frac{\chi_1^2 u^*}{4}+\frac{\chi_2^2 v^*}{4}\Big)  \frac{m^*(\mu,\nu,\lambda)}{w^* \inf_{t\ge\tau,x\in\Omega} w(t,x) }.
$$  }

 Now, by \eqref{cond-on-chi-eq2},
we have
\begin{align*}
a_{\min}-2\mu h(\chi_1,\chi_2)&>\Big(\frac{\chi_1^2 u^*+\chi_2^2 v^*}{4}\Big)^p \cdot m_0 \cdot m_1\nonumber\\
&\ge \Big(\frac{\chi_1^2 u^*+\chi_2^2 v^*}{4}\Big)^p\cdot m_0 \cdot  \Big( \frac{m(\mu,\nu,\lambda)}{\delta_0 \cdot |\Omega|\cdot   w^*\cdot  \eta_0 }\Big)^p.
\end{align*}
This implies that
\begin{align*}
\frac{\big(a_{\min}-2\mu h(\chi_1,\chi_2)\big)^{1/p}\delta_0 |\Omega| \min\{\nu,\lambda\}}{m_0^{1/p}}>\frac{\chi_1^2u^*+\chi_2^2 v^*}{4}\frac{m^*}{w^*}\frac{1}
{\eta_0}.
\end{align*}
This together with Theorem \ref{main-thm1} implies that
$$
\Big(\frac{\chi_1^2 u^*}{4}+\frac{\chi_2^2 v^*}{4}\Big)  \frac{m^*(\mu,\nu,\lambda)}{w^* \inf_{t\ge\tau,x\in\Omega} w(t,x) }  <\eta_0\quad \forall\,\, \tau\gg 1.
$$
Therefore,  $A_\tau$ is positive definite
for $\tau\gg 1$. Moreover, there is $\epsilon>0$ such that  \eqref{E-eq1} holds.
The lemma is thus proved.
\end{proof}

We now prove Theorem \ref{main-thm2}.

\begin{proof}[Proof of Theorem \ref{main-thm2}]
(1) First, we prove \eqref{stability-eq0-0}.
By Lemma \ref{stability-lm3},  for $\tau\gg 1$, we have
$$
E(t)-E(\tau)\le -\epsilon\int_\tau ^t \int_\Omega \Big[(u(s,x)-u^*)^2+(v(s,x)-v^*){^2}\Big]dx ds\quad \forall\, t\ge \tau.
$$
This implies that
$$
\epsilon\int_\tau ^t \int_\Omega \Big[(u(s,x)-u^*)^2+(v(s,x)-v^*){^2}\Big]dx ds
\le E(\tau) \quad \forall\, t\ge \tau.
$$
By Lemmas \ref{stability-lm0-1} and \ref{stability-lm0-2},  we have
\begin{equation*}
\int_\Omega \Big[(u(t,x)-u^*)^2+(v(t,x)-v^*)^2\Big]dx \to 0
\end{equation*}
as $t\to\infty$, that is,  \eqref{stability-eq0-0} holds.

Next, we prove \eqref{stability-eq0-1}. Assume that \eqref{stability-eq0-1} does not hold.
Then there are $\epsilon_0>0$ and $t_n\to\infty$ such that
$$
\|u(t_n,\cdot;u_0,v_0)-u^*\|_\infty+\|v(t_n,x;u_0,v_0)-v^*\|_\infty\ge \epsilon_0\quad \forall\, n=1,2,\cdots.
$$
By Proposition \ref{upper-bound-prop}, we may assume that
there are $u^{**}(x),v^{**}(x)$ such that
$$
\lim_{n\to\infty} (\|u(t_n,\cdot;u_0,v_0)-u^{**}(\cdot)\|_\infty+\|v(t_n,\cdot;u_0,v_0)-v^{**}\|_\infty)=0.
$$
By \eqref{stability-eq0-0}, we must have
$$
u^{**}(x)\equiv u^*\quad {\rm and}\quad v^{**}(x)\equiv v^*,
$$
which is a contradiction. Therefore, \eqref{stability-eq0-1} holds and Theorem \ref{main-thm2} is proved.

\smallskip

(2)  Assume that $\chi_1=\chi_2$. Let $\chi=\chi_1$. By \eqref{cond-on-chi-eq2-1}, \eqref{stability-eq5}, and Theorem \ref{main-thm1}(2),   there is $\epsilon>0$ such that
$$
E^{'}(t)\le -\epsilon \Big(\int_\Omega(u-u^*)^2+\int_\Omega (v-v^*)^2\Big)\quad \forall\,\, t\gg 1.
$$
The rest of the proof follows from the same arguments as in (1).
\end{proof}

\end{document}